\newtheorem{theorem}{Theorem}[section]
\newtheorem{lemma}[theorem]{Lemma}
\newtheorem{proposition}[theorem]{Proposition}
\newtheorem{corollary}[theorem]{Corollary}
\theoremstyle{definition}
\newtheorem{definition}[theorem]{Definition}
\newtheorem{remark}[theorem]{Remark}
\numberwithin{equation}{section}
\newcommand{\IR}{\mathbb{R}}
\newcommand{\IC}{\mathbb{C}}
\newcommand{\IN}{\mathbb{N}}
\newcommand{\abs}[1]{\lvert#1\rvert}
\newcommand{\e}{\mathrm{e}}
\renewcommand{\d}{\mathrm{d}}
\renewcommand{\i}{\mathrm{i}}
\newcommand{\ind}{{\mathbf{1}}}
\newcommand{\Ell}{\mathrm{L}} 
\newcommand{\Ce}{\mathrm{C}}
\newcommand {\HT}{{\mathrm{H}}}
\DeclareMathOperator{\supp}{supp}
\DeclareMathOperator{\Real}{Re}
\DeclareMathOperator{\D}{D}
\title{Convergence of subdiagonal Pad\'{e} approximations of $C_{0}$-semigroups}
\author{Moritz Egert}
\address{Fachbereich Mathematik, Technische Universit\"at Darmstadt, Schlossgartenstr. 7, 64289 Darmstadt, Germany}
\email{egert@mathematik.tu-darmstadt.de}
\thanks{}
\author{Jan Rozendaal}
\address{EEMCS, Delft University of Technology, Mekelweg 4, 2628CD Delft, The Netherlands}
\email{J.Rozendaal-1@tudelft.nl}
\thanks{}
\subjclass[2010]{Primary:  47D06, 41A21, 41A25; Secondary: 44A10, 65J08}
\date{\today}
\dedicatory{}
\keywords{rational approximation, operator semigroup, functional calculus, Pad\'{e} approximant, inverse Laplace transform, $\gamma$-boundedness}
\begin{document}
\begin{abstract}
Let $(r_{n})_{n \in \IN}$ be the sequence of subdiagonal Pad\'{e} approximations of the exponential function. We prove that for $-A$ the generator of a uniformly bounded $C_{0}$-semigroup $T$ on a Banach space $X$, the sequence $(r_{n}(-tA))_{n \in\IN}$ converges strongly to $T(t)$ on $\D(A^{\alpha})$ for $\alpha>\frac{1}{2}$.  Local uniform convergence in $t$ and explicit convergence rates in $n$ are established. For specific classes of semigroups, such as bounded analytic or exponentially $\gamma$-stable ones, stronger estimates are proved. Finally, applications to the inversion of the vector-valued Laplace transform are given.
\end{abstract}

\maketitle

\section{Introduction}
\label{Introduction}

\noindent In \cite[Rem.\ 3]{Jara} the question was raised, whether there are complex numbers $\lambda_{n,m}$ and $b_{n,m}$ such that any uniformly bounded $C_{0}$-semigroup $(T(t))_{t \geq 0}$ with generator $-A$ on a Banach space $X$ can be approximated in the strong sense on the domain $\D(A)$ of $A$ by sums of the form
\begin{align*}
 \frac{b_{n,1}}{t}\bigg(\frac{\lambda_{n,1}}{t} +A\bigg)^{-1} + \dots + \frac{b_{n,m_n}}{t}\bigg(\frac{\lambda_{n,m_n}}{t}+A\bigg)^{-1} \quad \quad (t>0),
\end{align*}
as $n \to \infty$, locally uniformly in $t \geq 0$, see also \cite{NOS}. According to \cite{NOS} such a method is called \emph{rational approximation without scaling and squaring}, because of the absence of both the successive squaring of the resolvent and the scaling of the generator by $\frac{1}{n}$ that is common to other approximation methods for the choice $n=2^{k}$, $k\in\IN$, see e.g.\@~\cite{Brenner-Thomee}. Recently, the rational approximation method of semigroups above, and others, have been used to provide new powerful inversion formulas for the vector-valued Laplace transform \cite{NOS}, \cite{Jara}.

\medskip

Suppose $(r_n)_{n \in \IN}$ is a sequence of rational functions such that the degree of the numerator of $r_{n}$ is less than the degree of its denominator and such that each $r_n$ has pairwise distinct poles $\lambda_{n,m}$ which all lie in the open right halfplane $\IC_+$. Developing $r_n$ into partial fractions, there are complex numbers $b_{n,m}$ such that
\begin{align*}
 r_n(z) = \frac{b_{n,1}}{\lambda_{n,1}-z} + \dots + \frac{b_{n,m_n}}{\lambda_{n,m_n}-z} \quad \quad (z \in \IC \setminus \{\lambda_{n,1},\dots,\lambda_{n,m_n} \}).
\end{align*}
If $-A$ generates a uniformly bounded $C_{0}$-semigroup, the open right halfplane belongs to the resolvent set of $-tA$ for any $t\geq 0$. By the Hille-Phillips calculus,
\begin{align*}
 r_n(-tA) = \frac{b_{n,1}}{t}\bigg(\frac{\lambda_{n,1}}{t}+A\bigg)^{-1} + \dots + \frac{b_{n,m_n}}{t}\bigg(\frac{\lambda_{n,m_n}}{t}+A\bigg)^{-1}.
\end{align*}
Hence, the original problem is solved, provided one can prove the following: there is a sequence $(r_n)_{n \in \IN}$ of rational functions satisfying the properties above and such that, for any generator $-A$ of a uniformly bounded semigroup $(T(t))_{t \geq 0}$ on a Banach space $X$, the convergence
\begin{align}
\label{goal}
 r_n(-tA)x \stackrel{n \to \infty}{\longrightarrow} T(t)x
\end{align}
holds for all $x \in \D(A)$, locally uniformly in $t \geq 0$.

\medskip

For bounded generators $-A$ this was achieved recently in \cite{Lee-Diss}. Therein, the author takes $r_n$ to be the $n$-th subdiagonal Pad\'{e} approximation to the exponential function and then uses a refinement of an error estimate shown in \cite{NOS}. For this choice of $r_n$, numerical experiments in \cite{Lee-Diss} hint at \eqref{goal} to hold with rate $\mathcal{O}(\frac{1}{\sqrt{n}})$ for any (unbounded) generator $-A$ of a uniformly bounded $C_0$-semigroup. However, no mathematical proof for this was known so far.

\medskip

Our main result is that, for $-A$ the generator of a uniformly bounded $C_{0}$-semigroup $T = (T(t))_{t \geq 0}$ on a complex Banach space $X$, $(r_n)_{n \in \IN}$ the sequence of subdiagonal Pad\'{e} approximations to the exponential function and $\alpha> \frac{1}{2}$, the desired convergence \eqref{goal} holds with rate
  \begin{align*}
   \mathcal{O}(n^{-\alpha + \frac{1}{2}})
  \end{align*}
for $x\in \D(A^{\alpha})$, locally uniformly in $t$. In particular, the choice $\alpha=1$ yields the rate suggested by the numerical experiments in \cite{Lee-Diss}. We also mention that in this case the approximation method without scaling and squaring converges with the same rate as is known for the classical scaling and squaring methods due to Brenner and Thom\'{e}e \cite{Brenner-Thomee}.

Moreover, we establish improvements in the following cases:

\begin{enumerate}[(i)]
 \item The semigroup $T$ is bounded analytic. In this case \eqref{goal} holds on $\D(A^{\alpha})$ for arbitrary $\alpha>0$ with rate
  \begin{align*}
   \mathcal{O}(n^{-\alpha})
  \end{align*}
 locally uniformly in $t$.

 \item The semigroup $T$ is exponentially $\gamma$-stable (see Definition~\ref{gamma-bounded}). Then, for $\alpha> 0$ arbitrary, \eqref{goal} holds with rate
  \begin{align*}
   \bigcap \limits_{a < \alpha} \mathcal{O}(n^{-a})
  \end{align*}
 on $\D(A^{\alpha})$, locally uniformly in $t$. This holds in particular for any exponentially stable $C_{0}$-semigroup on a Hilbert space.

 \item The operator $A$ has a bounded holomorphic functional calculus (see Section~\ref{Hinfty calculus}). In this case \eqref{goal} holds for any $\alpha> 0$ with rate 
 \begin{align*}
 \mathcal{O}(n^{-\alpha})
 \end{align*}
 on $\D(A^{\alpha})$, locally uniformly in $t$. In addition, one has local uniform convergence in $t$ on the whole space $X$. This applies in particular if $T$ is (similar to) a contraction semigroup on a Hilbert space. 
\end{enumerate}

\medskip

The paper is organized as follows. Section~\ref{Notation and background} gives a summary of the concepts that will be used throughout. Section~\ref{Technical estimates} provides several technical lemmas that are crucial for the proof of our main results, which in turn are precisely formulated and proved in Section~\ref{Proofs}. The proof of the main result for uniformly bounded $C_{0}$-semigroups is based on the Hille-Phillips calculus and a method for estimating the error introduced by Brenner and Thom\'{e}e in \cite{Brenner-Thomee}. The result for analytic semigroups relies on the holomorphic functional calculus for sectorial operators. In the case of exponentially $\gamma$-stable semigroups we use recent results from \cite{Haase-Rozendaal}. Extensions of the main results to intermediate spaces such as Favard spaces can be found in Section~\ref{intermediate spaces}. Section~\ref{Laplace transform} provides applications of our results to the inversion of the vector-valued Laplace transform.

\section{Notation and background}
\label{Notation and background}

\noindent The set of nonnegative reals is $\IR_{+}:=[0,\infty)$ and the standard complex right halfplane is 
\begin{align*}
	\IC_{+}:=\left\{z\in\IC\left|\Real(z)>0\right.\right\}.
\end{align*}
Any Banach space $X$ under consideration is taken over the complex numbers. The space of bounded linear operators on $X$ is denoted by $\mathcal{L}(X)$.

\subsection{Semigroups}
\label{Semigroups}

A \emph{$C_{0}$-semigroup} $T=(T(t))_{t\geq 0}\subseteq \mathcal{L}(X)$ is a strongly continuous representation of $(\IR_{+},+)$ on a Banach space $X$. The semigroup has \emph{type} $(M,\omega)$, for $M\geq 1$ and $\omega\in\IR$, if $\|T(t)\|\leq Me^{\omega t}$ for all $t\geq 0$. For each $C_{0}$-semigroup $T$ the \emph{growth bound} 
\begin{align*}
	\omega_{0}(T):=\inf\left\{w\in\IR\mid \exists M\geq 1: T\textrm{ has type }(M,\omega)\right\}
\end{align*}
is an element of $[-\infty,\infty)$. A $C_{0}$-semigroup $T$ is called \emph{uniformly bounded} if it has type $(M,0)$, and \emph{exponentially stable} if $\omega_{0}(T)<0$.

The domain of an operator $A$ on $X$ is denoted by $\D(A)$. The \emph{spectrum} of $A$ is denoted by $\sigma(A)\subseteq \IC$ and the \emph{resolvent set} by $\rho(A):=\IC\setminus \sigma(A)$. For $\lambda\in \rho(A)$ the \emph{resolvent operator} of $A$ at $\lambda$ is $R(\lambda,A):=(\lambda-A)^{-1}\in\mathcal{L}(X)$.

For $\varphi \in (0,\pi)$,  denote by 
\begin{align*}
 \Sigma_\varphi := \{z \in \IC \setminus \{0\} \mid \abs{\arg z} < \varphi \}
\end{align*}
the open sector with vertex $0$ and opening angle $2\varphi$ symmetric around the positive real axis. An operator $A$ is \emph{sectorial} of \emph{angle} $\varphi\in(0,\pi)$ if its spectrum is contained in $\overline{\Sigma_{\varphi}}$ and
\begin{align*}
\sup\left\{\|\lambda R(\lambda,A)\|\mid\lambda\in\IC\setminus \overline{\Sigma_{\psi}}\right\}<\infty
\end{align*}
for each $\psi\in (\varphi,\pi)$. If $-A$ generates a uniformly bounded $C_{0}$-semigroup, then $A$ is sectorial of angle $\frac{\pi}{2}$. A $C_{0}$-semigroup $T$ with generator $-A$ is \emph{bounded analytic} if $A$ is sectorial of angle $\varphi\in(0,\frac{\pi}{2})$, which in turn implies that $T$ is uniformly bounded.

\subsection{Laplace and Fourier transform}
\label{L transform}

Denote by $\mathbf{M}(\IR_{+})$ the space of bound\-ed regular complex-valued Borel measures on $\IR_{+}$, which form a Banach algebra under convolution with the variation norm $\|\cdot\|_{\mathbf{M}(\IR_{+})}$. Any $g\in \Ell^{1}(\IR_{+})$ defines an element $\mu_{g}\in\mathbf{M}(\IR_{+})$ by $\mu_{g}(\d t):=g(t)\,\d t$, where $\d t$ denotes Lebesgue measure on $\IR_{+}$. In this manner $\Ell^{1}(\IR_{+})$ is isometrically embedded into $\mathbf{M}(\IR_{+})$, and a function $g$ will be identified with its associated measure $\mu_{g}$ when convenient.

Let $\HT^{\infty}\!(\IC_{+})$ be the space of bounded holomorphic functions on $\IC_{+}$. This is a unital Banach algebra under pointwise operations and the norm
\begin{align*}
\|f\|_{\HT^{\infty}\!(\IC_{+})}:=\sup_{z\in\IC_{+}}\abs{f(z)} \quad\quad (f\in \HT^{\infty}\!(\IC_{+})).
\end{align*}
For $\mu\in\mathbf{M}(\IR_{+})$ the \emph{Laplace-Stieltjes transform} of $\mu$ is given by
\begin{align*}
 \hat{\mu}(z) := \int_0^\infty \e^{-z t} \, \mu(\d t) \qquad (z \in \IC_{+}),
\end{align*}
and $\mu \mapsto \hat{\mu}$ defines a contractive algebra homomorphism from $\mathbf{M}(\IR_{+})$ into $\HT^{\infty}\!(\IC_+) \cap \Ce(\overline{\IC_{+}})$.

The \emph{Fourier transform} of $f \in \Ell^{1}(\IR)$ is defined as
\begin{align*}
	(\mathcal{F} f)(\xi) := \int_\IR \e^{-\i x \xi}f(x) \, \d x \quad \quad (\xi \in \IR).
\end{align*}
Then $\mathcal{F}: \Ell^{1}(\IR) \to \Ell^{\infty}\!(\IR)$ is a contraction, and Plancherel's theorem asserts that $\mathcal{F}$ yields an almost isometric isomorphism $\mathcal{F}:\Ell^{2}(\IR) \to \Ell^{2}(\IR)$ with
\begin{align*}
	\|\mathcal{F}f\|_{\Ell^{2}(\IR)} = \sqrt{2 \pi} \|f\|_{\Ell^{2}(\IR)} \quad \quad (f \in \Ell^{2}(\IR^d)).
\end{align*}

\subsection{Functional calculus}
\label{functional calculus}

Let $(T(t))_{t \geq 0}$ be a $C_{0}$-semigroup of type $(M,0)$ on a Banach space $X$, with generator $-A$. For $\mu\in \mathbf{M}(\IR_{+})$ define
\begin{align*}
 \hat{\mu}(A)x:= \int_0^\infty T(t)x \, \mu(\d t)\quad \quad (x\in X).
\end{align*}
Then $\hat{\mu}(A)$ is a bounded operator on $X$ satisfying
\begin{align*}
\|\hat{\mu}(A)\| \leq M \|\mu\|_{\mathbf{M}(\IR_{+})}.
\end{align*}
Note that $\hat{\mu}(A)$ is well-defined since the Laplace transform is injective \cite[Thm.\@~6.3]{Widder}. The mapping
\begin{align*}
 \widehat{\mathbf{M}(\IR_{+})} \to \mathcal{L}(X),\quad \hat{\mu} \mapsto \hat{\mu}(A),
\end{align*}
is an algebra homomorphism, called the \emph{Hille-Phillips calculus} for  $A$. For more details on this concept see Chapter XV and Sections III.3.6 and IV.4.16 of \cite{Hille-Phillips}.

The Hille-Phillips calculus can be extended to a larger class of functions on $\IC_{+}$ by \emph{regularization} \cite[Sect.~1.2]{Haase}. If $f$ is a holomorphic function on $\IC_{+}$ for which there exists an $e\in \widehat{\mathbf{M}(\IR_{+})}$ such that $ef\in \widehat{\mathbf{M}(\IR_{+})}$ and $e(A)$ is injective, define
\begin{align*}
f(A):=e(A)^{-1}(ef)(A).
\end{align*}
This yields an (in general) unbounded operator on $X$, and the definition is independent of the choice of the regularizer $e$. It is consistent with the previous definition of $f(A)$ for $f\in \widehat{\mathbf{M}(\IR_{+})}$. 

\medskip

If $A$ is a sectorial operator of angle $\varphi\in(0,\pi)$, the holomorphic functional calculus for $A$ is defined, according to \cite[Ch.~2]{Haase}, as follows. For $\psi\in (\varphi,\pi)$ set
\begin{align*}
\HT_0^\infty\!(\Sigma_\psi) := \left\{g: \Sigma_\psi \to \IC \, \text{hol.} \mid \exists \, C,s>0 \, \forall \, z \in \Sigma_\psi: \abs{g(z)} \leq  C \min\{\abs{z}^s, \abs{z}^{-s}\}\right\}
\end{align*}
and then define $f(A)\in \mathcal{L}(X)$ for given $f\in \HT_0^\infty\!(\Sigma_\psi)$ as
\[
f(A):=\frac{1}{2\pi i}\int_{\partial \Sigma_{\nu}}f(z)R(z,A)\,\d z,
\]
where $\nu \in(\varphi,\psi)$ and $\partial \Sigma_{\nu}$ is the boundary curve of the sector $\Sigma_{\nu}$, oriented such that $\sigma(A)$ is surrounded counterclockwise. This integral converges absolutely and is independent of the choice of $\nu$, by Cauchy's theorem. Furthermore, define 
\begin{align*}
g(A):=f(A)+c(1+A)^{-1}+d
\end{align*}
if $g$ is of the form $g(\cdot)=f(\cdot)+c(1+\cdot)^{-1}+d$ for $f\in\HT^{\infty}_{0}\!(\Sigma_{\psi})$ and $c,d\in \IC$. This definition is independent of the particular representation of $g$ and yields an algebra homomorphism
\begin{align*}
\HT_0^\infty\!(\Sigma_\psi)\oplus\langle (1+\cdot)^{-1}\rangle\oplus\langle\ind\rangle\rightarrow \mathcal{L}(X),\quad g\mapsto g(A), 
\end{align*}
the \emph{holomorphic functional calculus} for the sectorial operator $A$. 

One then extends by regularization as above. In particular, for any $\alpha \in \IC_+$ the function $z\mapsto z^{\alpha}$ is regularizable by $z\mapsto (1+z)^{-n}$, where $n>\Real(\alpha)$, and yields the \emph{fractional power} $A^{\alpha}$ of $A$ with domain $\D(A^{\alpha})$. One has $A^1 = A$ and $A^0 = 1$.

\medskip

If $-A$ generates a uniformly bounded $C_{0}$-semigroup, then the Hille-Phillips calculus extends the holomorphic functional calculus for angles $\psi\in(\frac{\pi}{2},\pi)$, see Lemma 3.3.1 and Proposition 3.3.2 in \cite{Haase}. In particular, for $\alpha \in \IC_+$ the fractional power $A^{\alpha}$ can be defined in the Hille-Phillips calculus yielding the same operator as in the holomorphic functional calculus.

\subsection{Subdiagonal Pad\'{e} approximations}
\label{Pade approximations}

Throughout, $r_n:= \frac{P_n}{Q_n}$ denotes the $n$-th \emph{subdiagonal Pad\'{e} approximation} to the exponential function,  i.e.\@ $P_n$ and $Q_n$ are the unique polynomials of degree $n$ and $n+1$, respectively, such that $P_n(0) = Q_n(0) =1$ and 
\begin{align*}
 \left|r_n(z) - \e^z\right| \leq C \abs{z}^{2n+2}
\end{align*}
for $z \in \IC$ in a neighborhood of $0$. Such polynomials $P_n$ and $Q_n$ exist and are unique, and they take the explicit form
\begin{align}
\label{explicit pade}
\begin{split}
P_n(z) &= \sum_{j=0}^{n} \frac{(2n+1-j)!n!}{(2n+1)!j!(n-j)!} z^j, \\
Q_n(z) &= \sum_{j=0}^{n+1} \frac{(2n+1-j)!(n+1)!}{(2n+1)!j!(n+1-j)!} (-z)^j,
\end{split}
\end{align}
see e.g.\@ \cite[Thm.~3.11]{hairer2004solving}. As has already been observed by Perron \cite[Sect.~75]{perron1913lehre}, the error-term $r_n(z) - \e^z$ can be represented as
\begin{align}
\label{Perron}
 r_n(z) - \e^z = \frac{(-1)^{n+2}}{Q_n(z)} \frac{1}{(2n+1)!} z^{2n+2} \e^z \int_0^1 s^n (1-s)^{n+1} \e^{-sz} \, \d s
\end{align}
for all $z \in \IC$ such that $Q_n(z) \neq 0$. 

By a famous result of Ehle \cite[Cor.~3.2]{Ehle}, subdiagonal Pad\'{e} approximations are \emph{$\mathcal{A}$-stable}, i.e.\@ for $n \in \IN$ the function $r_n$ is holomorphic in a neighborhood of the closed left halfplane $\overline{\IC_{-}}$ and $\abs{r_n(z)} \leq 1$ holds for $z\in\overline{\IC_{-}}$. The polynomial $Q_n$ has pairwise distinct roots \cite[Thm.~4.11]{hairer2004solving} and, combining Corollaries 1.1 and 3.7 in \cite{Ehle}, it follows that these roots are located in the open right half-plane.

\section{Technical estimates}
\label{Technical estimates}

\noindent In this section we present a careful and rather technical analysis of the behavior of the subdiagonal Pad\'{e} approximations on the imaginary axis. With a view towards the functional calculi from Section~\ref{functional calculus}, we shall mostly work with $r_n(- \, \cdot)$ rather than with $r_{n}$.

The following lemma, proved recently in \cite{Frank-Lee}, is a key ingredient.

\begin{lemma}
\label{Pade-boundary-estimate}
If $n \in \IN$ and $t \in \IR$ then
\begin{align*}
 \abs{Q_n(\i t)} \geq 1, \quad \bigg |\frac{Q'_n(\i t)}{Q_n(\i t)} \bigg | \leq 1, \quad \text{and} \quad \abs{r'_n(\i t)} \leq 2.
\end{align*}
\end{lemma}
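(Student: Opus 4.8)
The plan is to establish the three estimates in the order stated, exploiting throughout two structural facts. First, the polynomials $P_n$ and $Q_n$ have real coefficients, so that $\overline{Q_n(\i t)} = Q_n(-\i t)$ and $\overline{Q_n'(\i t)} = Q_n'(-\i t)$ for $t \in \IR$. Second, by Ehle's theorem we have $\mathcal{A}$-stability, i.e.\ $\abs{r_n(\i t)} \leq 1$, which reads $\abs{P_n(\i t)} \leq \abs{Q_n(\i t)}$. Writing $Q_n(z) = q\prod_{k=1}^{n+1}(z-\lambda_k)$ with roots $\lambda_k \in \IC_+$, the function $Q_n$ is zero-free on $\overline{\IC_{-}}$ and $Q_n'(z)/Q_n(z) = \sum_{k=1}^{n+1}(z-\lambda_k)^{-1}$, each summand having strictly negative real part on the imaginary axis.

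The engine for the first estimate is the algebraic identity
\begin{align*}
 P_n(z) = Q_n(-z) + \frac{z}{n+1}\,Q_n'(-z),
\end{align*}
which I would verify by comparing coefficients in \eqref{explicit pade}: the coefficient of $z^j$ in $P_n$ equals $\frac{n+1-j}{n+1}$ times that of $Q_n(-z)$. Putting $z = \i t$, taking moduli, and using $\overline{Q_n(\i t)} = Q_n(-\i t)$ gives $\abs{P_n(\i t)}^2 = \bigl\lvert Q_n(\i t) - \tfrac{\i t}{n+1}Q_n'(\i t)\bigr\rvert^2$. Expanding and invoking $\abs{P_n(\i t)} \leq \abs{Q_n(\i t)}$ yields
\begin{align*}
 \frac{2t}{n+1}\,\Imag\bigl(\overline{Q_n(\i t)}\,Q_n'(\i t)\bigr) + \frac{t^2}{(n+1)^2}\,\abs{Q_n'(\i t)}^2 \leq 0.
\end{align*}
Since $\frac{\d}{\d t}\abs{Q_n(\i t)}^2 = -2\,\Imag\bigl(\overline{Q_n(\i t)}\,Q_n'(\i t)\bigr)$, this inequality forces $t \mapsto \abs{Q_n(\i t)}^2$ to be non-decreasing for $t>0$ and non-increasing for $t<0$; hence its minimum is attained at $t=0$ with value $\abs{Q_n(0)}^2 = 1$, which proves $\abs{Q_n(\i t)} \geq 1$.

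The derivative estimate amounts to $\abs{Q_n'(\i t)} \leq \abs{Q_n(\i t)}$, equivalently $\bigl\lvert\sum_{k=1}^{n+1}(\i t-\lambda_k)^{-1}\bigr\rvert \leq 1$, and this is where I expect the real difficulty to lie. The displayed inequality already settles the tail: it gives $\tfrac{\abs{t}}{n+1}\abs{Q_n'(\i t)} \leq 2\abs{Q_n(\i t)}$, so $\abs{Q_n'(\i t)/Q_n(\i t)} \leq 1$ as soon as $\abs{t} \geq 2(n+1)$. The bulk regime $\abs{t} \lesssim n$, however, is \emph{not} controlled by $\mathcal{A}$-stability alone—the mere fact that each $(\i t-\lambda_k)^{-1}$ lies in the left halfplane does not bound the modulus of their sum—and appears to require the explicit structure of the Pad\'{e} coefficients, for instance showing that the even polynomial $\abs{Q_n(\i t)}^2 - \abs{Q_n'(\i t)}^2$ is nonnegative on $\IR$ by means of the precise location of the roots $\lambda_k$. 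This is the main obstacle.

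Granting the second estimate together with its numerator-analogue $\abs{P_n'(\i t)} \leq \abs{Q_n(\i t)}$ (obtained by the same technical analysis, now using that the zeros of $P_n$ lie in $\overline{\IC_{-}}$), the third estimate is short. From the partial-fraction decomposition $r_n' = \frac{P_n'}{Q_n} - r_n\,\frac{Q_n'}{Q_n}$ one gets
\begin{align*}
 \abs{r_n'(\i t)} \leq \frac{\abs{P_n'(\i t)}}{\abs{Q_n(\i t)}} + \abs{r_n(\i t)}\,\bigg\lvert\frac{Q_n'(\i t)}{Q_n(\i t)}\bigg\rvert \leq 1 + 1\cdot 1 = 2,
\end{align*}
using $\abs{r_n(\i t)} \leq 1$, the second estimate, and the numerator-analogue (the last bound extending to the zeros of $P_n$ by continuity). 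Thus the two logarithmic-derivative bounds are the technical heart, and I would concentrate the effort on the bulk regime of the estimate for $Q_n'/Q_n$.
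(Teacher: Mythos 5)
The first of the three estimates is the only one you actually prove, and that part is correct and rather elegant: the identity $P_n(z) = Q_n(-z) + \tfrac{z}{n+1}Q_n'(-z)$ does check out against \eqref{explicit pade} (the coefficient of $z^j$ picks up exactly the factor $\tfrac{n+1-j}{n+1}$), and combining it with $\abs{P_n(\i t)}\leq\abs{Q_n(\i t)}$ and the sign analysis of $\tfrac{\d}{\d t}\abs{Q_n(\i t)}^2$ legitimately yields $\abs{Q_n(\i t)}\geq 1$. But the second estimate is left unproved precisely where it matters: your argument only gives $\abs{Q_n'(\i t)/Q_n(\i t)}\leq 2(n+1)/\abs{t}$, which settles $\abs{t}\geq 2(n+1)$, and you explicitly concede that the regime $\abs{t}\lesssim n$ "appears to require the explicit structure of the Pad\'{e} coefficients" without supplying that argument. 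Since the third estimate is then derived from the second together with a further unproved claim ($\abs{P_n'(\i t)}\leq\abs{Q_n(\i t)}$, asserted "by the same technical analysis"), two of the three assertions of the lemma rest on acknowledged gaps. An honest flag is not a proof; as it stands the proposal establishes only $\abs{Q_n(\i t)}\geq 1$.

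For context: the paper does not prove this lemma at all — it is imported verbatim from the cited preprint of Neubrander and Windsperger \cite{Frank-Lee}, so there is no internal proof to compare your route against. If you want to complete your approach, the missing ingredient is a genuinely quantitative bound on $\sum_k(\i t-\lambda_k)^{-1}$ in the bulk; knowing only that each summand has negative real part cannot work (as you note), so you would need either the precise root distribution of $Q_n$ or a direct verification that $\abs{Q_n(\i t)}^2-\abs{Q_n'(\i t)}^2$ is a polynomial in $t^2$ with nonnegative coefficients, which is essentially the computation carried out in \cite{Frank-Lee}.
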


First estimates for the error-term $\abs{r_n(-z) - \e^{-z}}$ and its complex derivative can be obtained from the Perron representation \eqref{Perron}.

\begin{lemma}
\label{error estimate}
If $n \in \IN$ and $z \in \overline{\IC_{+}}$ then
\begin{align*}
 \abs{r_n(-z) - \e^{-z}} \leq \frac{1}{2}\left(\frac{n!}{(2n+1)!}\right)^2 \abs{z}^{2n+2}
\end{align*}
and
\begin{align*}
 \abs{r'_n(-z) - \e^{-z}} \leq \left(\frac{n!}{(2n+1)!}\right)^2 \left(\frac{4}{5} \abs{z}^{2n+2}+ (n+1)\abs{z}^{2n+1} \right).
\end{align*}
\end{lemma}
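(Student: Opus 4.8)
The plan is to read both estimates off the Perron representation \eqref{Perron}. Replacing $z$ by $-z$, using $(-z)^{2n+2}=z^{2n+2}$, and absorbing the outer factor $\e^{-z}$ into the integral, I obtain for every $z\in\overline{\IC_+}$ with $Q_n(-z)\neq 0$ the identity
\begin{align*}
 r_n(-z)-\e^{-z} = \frac{(-1)^{n+2}}{(2n+1)!}\,z^{2n+2}\,\frac{1}{Q_n(-z)}\int_0^1 s^n(1-s)^{n+1}\e^{-(1-s)z}\,\d s.
\end{align*}
The gain of this rewriting is that for $z\in\overline{\IC_+}$ and $s\in[0,1]$ one has $\Real((1-s)z)\ge 0$, so $\abs{\e^{-(1-s)z}}\le 1$; this strips all exponential growth out of the integrand, leaving only the elementary Beta integral $\int_0^1 s^n(1-s)^{n+1}\,\d s=\frac{n!(n+1)!}{(2n+2)!}$.

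The one analytic ingredient beyond \eqref{Perron} is to upgrade the boundary bounds of Lemma~\ref{Pade-boundary-estimate} from the imaginary axis to the whole closed left halfplane, where the argument $-z$ lives. Since the roots of $Q_n$ lie in the open right halfplane, both $w\mapsto 1/Q_n(w)$ and $w\mapsto Q_n'(w)/Q_n(w)$ are holomorphic on $\overline{\IC_-}$, bounded there, and tend to $0$ as $\abs{w}\to\infty$; on the imaginary axis their moduli are $\le 1$ by Lemma~\ref{Pade-boundary-estimate}. A Phragm\'en--Lindel\"of argument (equivalently, maximum modulus on large half-disks) then gives $\abs{Q_n(-z)}\ge 1$ and $\abs{Q_n'(-z)/Q_n(-z)}\le 1$ for all $z\in\overline{\IC_+}$. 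With this, the first estimate is immediate: the modulus of the right-hand side above is at most $\frac{1}{(2n+1)!}\abs{z}^{2n+2}\cdot\frac{n!(n+1)!}{(2n+2)!}$, and a routine factorial simplification collapses this to exactly $\tfrac12\left(\frac{n!}{(2n+1)!}\right)^2\abs{z}^{2n+2}$.

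For the second estimate I first note $\frac{\d}{\d z}\big(r_n(-z)-\e^{-z}\big)=-\big(r_n'(-z)-\e^{-z}\big)$, so the quantity to bound is the modulus of the derivative of the error term. Writing the right-hand side above as $c\,z^{2n+2}\phi(z)\psi(z)$ with $c=\frac{(-1)^{n+2}}{(2n+1)!}$, $\phi(z)=1/Q_n(-z)$ and $\psi(z)=\int_0^1 s^n(1-s)^{n+1}\e^{-(1-s)z}\,\d s$, I differentiate by the product rule. The chain rule gives $\phi'(z)=Q_n'(-z)/Q_n(-z)^2$, and differentiating under the integral sign (legitimate, as the integrand is smooth on the compact interval $[0,1]$) gives $\psi'(z)=-\int_0^1 s^n(1-s)^{n+2}\e^{-(1-s)z}\,\d s$. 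The halfplane bounds yield $\abs{\phi}\le 1$ and $\abs{\phi'}=\abs{Q_n'(-z)/Q_n(-z)}\cdot\abs{1/Q_n(-z)}\le 1$, while the Beta integrals give $\abs{\psi}\le\frac{n!(n+1)!}{(2n+2)!}$ and $\abs{\psi'}\le\frac{n!(n+2)!}{(2n+3)!}$. The term from differentiating $z^{2n+2}$ carries the factor $(2n+2)$ and, after simplification, produces precisely the summand $(n+1)\left(\frac{n!}{(2n+1)!}\right)^2\abs{z}^{2n+1}$ of the claim.

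The remaining term is $\frac{1}{(2n+1)!}\abs{z}^{2n+2}\big(\abs{\phi'}\abs{\psi}+\abs{\phi}\abs{\psi'}\big)$, and this is where I expect the only real (if elementary) obstacle: verifying that its factorial prefactor is bounded by $\frac45\left(\frac{n!}{(2n+1)!}\right)^2$. Reducing both Beta integrals to multiples of $\frac{(n!)^2}{(2n+1)!}$ gives the coefficient $\tfrac12+\frac{n+2}{2(2n+3)}$, which is decreasing in $n$ and equals exactly $\frac45$ at $n=1$; hence it is $\le\frac45$ for all $n\ge 1$. This both explains the appearance of the constant $\frac45$ and shows that it is sharp at $n=1$. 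Combining the two contributions yields the asserted bound. In summary, the conceptually delicate point is the maximum-principle extension of Lemma~\ref{Pade-boundary-estimate} to $\overline{\IC_-}$, while the bookkeeping that makes the constant $\frac45$ hold uniformly in $n$ is the part requiring care in the computation.
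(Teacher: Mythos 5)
Your proposal is correct and follows essentially the same route as the paper: take the Perron representation at $-z$, absorb the outer exponential so that $\abs{\e^{-(1-s)z}}\le 1$ on $\overline{\IC_+}$, extend the boundary bounds of Lemma~\ref{Pade-boundary-estimate} to the halfplane by the maximum principle, and differentiate by the product rule into the same three terms (the paper's $T_{n,1},T_{n,2},T_{n,3}$ correspond exactly to your $\phi'$, $z^{2n+2}$, and $\psi'$ contributions, with the same split $\tfrac12+\tfrac{n+2}{2(2n+3)}\le\tfrac45$). Your observation that $\tfrac45$ is attained at $n=1$ is a pleasant aside but does not change the argument.
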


\begin{proof}
Fix $n \in \IN$ and $z \in \overline{\IC_{+}}$. We first prove the estimate for $\abs{r_n(-z) - \e^{-z}}$. Taking the absolute value in \eqref{Perron} and computing the integral over $s$ using the representation of Euler's beta function yields
 \begin{align}
 \label{Error-Estimate: Eq1}
 \begin{split}
  \abs{r_n(-z) - \e^{-z}} 
  &\leq \frac{1}{\abs{Q_n(-z)}} \frac{1}{(2n+1)!} \abs{z}^{2n+2} \int_0^1 s^n (1-s)^{n+1} \abs{\e^{(s-1)z}} \, \d s \\
  &\leq \frac{1}{\abs{Q_n(-z)}} \frac{1}{(2n+1)!} \abs{z}^{2n+2} \frac{n!(n+1)!}{(2n+2)!}.
 \end{split}
 \end{align}
As $Q_n$ is a polynomial having all roots in the open right halfplane, Lemma~\ref{Pade-boundary-estimate} and the maximum principle for holomorphic functions yield $\frac{1}{\abs{Q_n(-z)}} \leq 1$. Hence \eqref{Error-Estimate: Eq1} implies
\begin{align*}
  \abs{r_n(-z) - \e^{-z}} \leq  \frac{n!(n+1)!}{(2n+1)!(2n+2)!} \abs{z}^{2n+2} = \frac{1}{2}\left(\frac{n!}{(2n+1)!}\right)^2 \abs{z}^{2n+2}.
\end{align*}

For the proof of the second claim we differentiate the Perron representation \eqref{Perron} with respect to $z$, obtaining
\begin{align}
\label{error estimate derivative: split equation}
 r'_n(-z) - \e^{-z} = T_{n,1}(z) + T_{n,2}(z) + T_{n,3}(z),
\end{align}
where
\begin{align*}
 T_{n,1}(z) 	&= \frac{(-1)^{n+3} Q'_n(-z)}{Q_n(-z)^2} \frac{(-z)^{2n+2}}{(2n+1)!} \int_0^1 s^n (1-s)^{n+1} \e^{(s-1)z} \, \d s \\
		&= -\frac{Q'_n(-z)}{Q_n(-z)} (r_n(-z) - \e^{-z}), \\ 
 T_{n,2}(z)	&= \frac{(-1)^{n+2}}{Q_n(-z)} \frac{(2n+2)(-z)^{2n+1}}{(2n+1)!} \int_0^1 s^n (1-s)^{n+1} \e^{(s-1)z}\, \d s \\
		&= -\frac{2n+2}{z} (r_n(-z) - \e^{-z}), \quad \text{and}\\
 T_{n,3}(z) 	&= \frac{(-1)^{n+2}}{Q_n(-z)} \frac{(-z)^{2n+2}}{(2n+1)!} \int_0^1 s^n (1-s)^{n+2} \e^{(s-1)z} \, \d s.		
\end{align*}
We estimate these three terms separately. As all roots of $Q_n$ lie in $\IC_+$, Lemma~\ref{Pade-boundary-estimate} and the maximum principle for holomorphic functions yield $\|\frac{Q'_n(-\, \cdot)}{Q_n(-\, \cdot)}\|_{\HT^{\infty}\!(\IC_{+})} \leq 1$. The first part of this lemma then implies
\begin{align*}
 \abs{T_{n,1}(z)} \leq \frac{1}{2} \bigg(\frac{n!}{(2n+1)!}\bigg)^2 \abs{z}^{2n+2}
\end{align*}
and
\begin{align*}
 \abs{T_{n,2}(z)} \leq (n+1)\bigg(\frac{n!}{(2n+1)!}\bigg)^2 \abs{z}^{2n+1}.
\end{align*}
Finally, note that the only difference between $T_{n,3}(z)$ and $r_n(-z) - \e^{-z}$ is an additional factor $(1-s)$ under the respective integral sign. By the same arguments as in the proof of the first assertion of this lemma, taking into account that $\frac{n+2}{2n+3} \leq \frac{3}{5}$, 
\begin{align*}
 \abs{T_{n,3}(z)} \leq \frac{n!(n+2)!}{(2n+1)!(2n+3)!} \abs{z}^{2n+2} \leq \frac{3}{10} \bigg(\frac{n!}{(2n+1)!}\bigg)^2 \abs{z}^{2n+2}.
\end{align*}
Inserting these inequalities on the right-hand side of \eqref{error estimate derivative: split equation} concludes the proof.
\end{proof}

Clearly, the error-term $r_n(-z) - \e^{-z}$ does not converge to $0$ uniformly in $z \in \IC_+$ as $n \to \infty$. The following lemma shows that the modified error-terms
\begin{align}
\label{important function}
 f_{n,\alpha}: \overline{\IC_+}\setminus \{0\} \to \IC, \quad f_{n,\alpha}(z) := \frac{r_n(-z) - \e^{-z}}{z^\alpha},
\end{align}
where $n \in \IN$ and $\alpha\in(0,2n+2)$, do converge uniformly.

\begin{lemma}
\label{multiplicator estimate}
 If $n \in \IN$ and $\alpha \in (0, 2n+2)$ then
\begin{align*}
 \sup_{z \in \overline{\IC_+}} \abs{f_{n,\alpha}(z)} \leq 2 \left(\frac{n!}{(2n+1)!}\right)^{\frac{\alpha}{n+1}}.
\end{align*}
\end{lemma}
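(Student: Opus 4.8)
The plan is to interpolate between two complementary pointwise bounds on the numerator $r_n(-z)-\e^{-z}$: one that vanishes to high order as $\abs{z}\to 0$ (hence is small for $\abs{z}$ near the origin but grows), and one that stays uniformly bounded for $\abs{z}$ large. After dividing by $\abs{z}^{\alpha}$, the first controls $f_{n,\alpha}$ in the near field and the second in the far field; optimizing the cross-over radius then produces exactly the stated constant.

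First I would record the two ingredients. Lemma~\ref{error estimate} gives, for every $z\in\overline{\IC_+}$,
\[
\abs{r_n(-z)-\e^{-z}}\leq \tfrac{1}{2}\Big(\tfrac{n!}{(2n+1)!}\Big)^{2}\abs{z}^{2n+2},
\]
which is the useful bound for small $\abs{z}$. For large $\abs{z}$ I would use $\mathcal{A}$-stability: by Ehle's result $\abs{r_n(-z)}\leq 1$ for $z\in\overline{\IC_+}$, while $\abs{\e^{-z}}\leq 1$ there because $\Real(z)\geq 0$, so the triangle inequality yields $\abs{r_n(-z)-\e^{-z}}\leq 2$.

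Next, abbreviate $r:=\abs{z}$ and $K:=\tfrac{n!}{(2n+1)!}$. Combining the two estimates gives, for all $z\in\overline{\IC_+}\setminus\{0\}$,
\[
\abs{f_{n,\alpha}(z)}\leq \min\big\{\tfrac{1}{2}K^{2}\,r^{2n+2-\alpha},\; 2\,r^{-\alpha}\big\}.
\]
Because $\alpha<2n+2$, the first expression is increasing in $r$ and the second is decreasing, so the supremum of the minimum is attained where the two coincide. Solving $\tfrac{1}{2}K^{2}r^{2n+2-\alpha}=2\,r^{-\alpha}$ gives $r_*^{2n+2}=4K^{-2}$, and substituting back shows the common value equals
\[
2\,r_*^{-\alpha}=2\cdot 4^{-\alpha/(2n+2)}\,K^{\alpha/(n+1)}.
\]
Finally, since $\alpha>0$ forces $4^{-\alpha/(2n+2)}\leq 1$, this is at most $2K^{\alpha/(n+1)}=2\big(\tfrac{n!}{(2n+1)!}\big)^{\alpha/(n+1)}$, which is the claim.

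I expect no genuine obstacle here; the only two points demanding care are invoking $\mathcal{A}$-stability correctly to obtain the far-field constant $2$, and verifying that the factor $4^{-\alpha/(2n+2)}$ arising from the mismatched constants $\tfrac{1}{2}$ and $2$ is indeed $\leq 1$, so that it can simply be dropped without damaging the bound.
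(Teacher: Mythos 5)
Your proof is correct and follows essentially the same route as the paper: combine the high-order vanishing bound from Lemma~\ref{error estimate} with the uniform bound $\abs{r_n(-z)-\e^{-z}}\leq 2$ from $\mathcal{A}$-stability, take the minimum, and locate the crossover radius where the two expressions coincide. The paper likewise drops the harmless factor $2^{-\alpha/(n+1)}\leq 1$ at the end, so the two arguments match in every essential detail.
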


\begin{proof}
The $\mathcal{A}$-stability of $r_n$ implies
\begin{align*}
 \abs{f_{n,\alpha}(z)} = \left| \frac{r_n(-z) - \e^{-z}}{z^\alpha} \right| \leq \frac{2}{\abs{z}^{\alpha}}\quad  \quad (z \in \overline{\IC_+}\setminus\left\{0\right\}),
\end{align*}
hence in combination with Lemma~\ref{error estimate}:
\begin{align}
\label{multiplicator estimate: eq1}
 \sup_{z \in \overline{\IC_+}} \abs{f_{n,\alpha}(z)} \leq \sup_{z \in \overline{\IC_+}} \min \left\{\frac{2}{\abs{z}^{\alpha}}, \frac{1}{2}\left(\frac{n!}{(2n+1)!}\right)^2 \abs{z}^{2n+2-\alpha} \right \}.
\end{align}
Note that $\abs{z}^{-\alpha}$ is a strictly decreasing function and $\abs{z}^{2n+2-\alpha}$ a strictly increasing function of $\abs{z} \in (0,\infty)$. Hence, the supremum on the right-hand side of \eqref{multiplicator estimate: eq1} is actually a maximum that is attained at the value of $\abs{z}$ for which
\begin{align*}
 \frac{2}{\abs{z}^{\alpha}} = \frac{1}{2}\left(\frac{n!}{(2n+1)!}\right)^2 \abs{z}^{2n+2-\alpha},
\end{align*}
i.e.\@ for
\begin{align*}
 \abs{z} = \left( \frac{2(2n+1)!}{n!}\right)^{\frac{1}{n+1}}.
\end{align*}
Inserting such a $z$ in \eqref{multiplicator estimate: eq1} concludes the proof.
\end{proof}

The following two simple calculus lemmas turn out to be quite useful.

\begin{lemma}
\label{integrand estimate}
 If $u,v,U,V>0$ and $0 \leq w \leq \left(\frac{V}{U}\right)^{\frac{1}{u+v}}$ then
 \begin{align*}
  \int_w^\infty \min \left \{Ur^u, Vr^{-v} \right \} \, \frac{\d r}{r} =V\left(\frac{U}{V}\right)^{\frac{v}{u+v}}\frac{u+v}{uv} - \frac{U}{u} w^u.
 \end{align*}
\end{lemma}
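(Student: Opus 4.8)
The plan is to compute the integral by splitting the domain of integration at the point where the two functions inside the minimum coincide. Since $u,v,U,V>0$, the map $r\mapsto Ur^u$ is strictly increasing and $r\mapsto Vr^{-v}$ is strictly decreasing on $(0,\infty)$, so they agree at exactly one point $r_{0}:=\left(\frac{V}{U}\right)^{\frac{1}{u+v}}$, characterized by $Ur_{0}^{u}=Vr_{0}^{-v}$. Consequently $\min\{Ur^u,Vr^{-v}\}$ equals $Ur^u$ for $0<r<r_{0}$ and equals $Vr^{-v}$ for $r>r_{0}$. The hypothesis $0\le w\le r_{0}$ is precisely what is needed to ensure that on the whole interval $[w,r_{0}]$ the first branch is active, so that the integral splits cleanly as
\[
  \int_{w}^{r_{0}} U r^{u-1}\,\d r + \int_{r_{0}}^{\infty} V r^{-v-1}\,\d r,
\]
with no further subdivision required.

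Next I would evaluate the two elementary integrals. The first gives $\frac{U}{u}\left(r_{0}^{u}-w^{u}\right)$, and the second, which converges because $v>0$, gives $\frac{V}{v}\,r_{0}^{-v}$. Substituting the value of $r_{0}$ yields $r_{0}^{u}=\left(\frac{V}{U}\right)^{\frac{u}{u+v}}$ and $r_{0}^{-v}=\left(\frac{U}{V}\right)^{\frac{v}{u+v}}$.

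Finally, it remains to combine and simplify. The key algebraic observation is that $\frac{U}{u}r_{0}^{u}$ and $\frac{V}{v}r_{0}^{-v}$ are proportional to the same power product: both equal $U^{\frac{v}{u+v}}V^{\frac{u}{u+v}}$ multiplied by $\frac{1}{u}$ and $\frac{1}{v}$ respectively. Hence their sum factors as $U^{\frac{v}{u+v}}V^{\frac{u}{u+v}}\bigl(\frac{1}{u}+\frac{1}{v}\bigr)=U^{\frac{v}{u+v}}V^{\frac{u}{u+v}}\frac{u+v}{uv}$, and rewriting $U^{\frac{v}{u+v}}V^{\frac{u}{u+v}}=V\left(\frac{U}{V}\right)^{\frac{v}{u+v}}$ recovers the main term of the claimed identity, while the boundary contribution $-\frac{U}{u}w^{u}$ from the lower limit of the first integral supplies the remaining term. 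This last step, the exponent bookkeeping that reveals the two expressions as proportional and collapses them into the single factor $\frac{u+v}{uv}$, is the only point requiring genuine care; I would guard against slips by checking that the result respects the symmetry under the exchange $(U,u)\leftrightarrow(V,v)$ of the integrand's two branches.
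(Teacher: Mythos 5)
Your proof is correct and follows the same route as the paper: split the integral at the crossover point $r_{0}=\left(\frac{V}{U}\right)^{\frac{1}{u+v}}$, using $w\le r_{0}$ to ensure the increasing branch is active on $[w,r_{0}]$, then evaluate the two elementary integrals and simplify. The paper leaves the final algebra to the reader, whereas you carry it out explicitly; the bookkeeping checks out.
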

\begin{proof}
Let $r_{0}\geq 0$ be such that $Ur_{0}^{u}=Vr_{0}^{-v}$ and note that $r_0 \geq w$. Therefore, 
\begin{align*}
	\int_w^\infty \min \left \{Ur^u, Vr^{-v} \right \} \, \frac{\d r}{r} =\int_w^{r_{0}} Ur^u \, \frac{\d r}{r} +\int_{r_{0}}^\infty Vr^{-v} \, \frac{\d r}{r}.
\end{align*}
Calculating the integrals on the right-hand side and simplifying yields the claim.
\end{proof}

\begin{lemma}
\label{factorial estimate}
If $n \in \IN$ then
\begin{align*}
 \left(\frac{n!}{(2n+1)!}\right)^{\frac{1}{n+1}} \leq \frac{1}{n+1}.
\end{align*}

\begin{proof}
Simply note that
\begin{align*}
 \left(\frac{n!}{(2n+1)!}\right)^{\frac{1}{n+1}}(n+1) = \left(\frac{n! (n+1)^{n+1}}{(2n+1)!}\right)^{\frac{1}{n+1}} \leq 1 \quad \quad (n \in \IN) &\qedhere.
\end{align*}
\end{proof}
\end{lemma}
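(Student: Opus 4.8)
The plan is to reduce the stated inequality to a purely multiplicative statement about factorials. Since both sides are positive and the map $x \mapsto x^{n+1}$ is strictly increasing on $(0,\infty)$, the claim
\begin{align*}
 \left(\frac{n!}{(2n+1)!}\right)^{\frac{1}{n+1}} \leq \frac{1}{n+1}
\end{align*}
is equivalent, after raising both sides to the $(n+1)$-st power, to the inequality
\begin{align*}
 n!\,(n+1)^{n+1} \leq (2n+1)!.
\end{align*}
This is precisely the form that appears naturally if one multiplies the left-hand side of the lemma by $n+1$ and pulls the factor $(n+1)^{n+1}$ inside the $\frac{1}{n+1}$-th root, so it suffices to verify the displayed factorial inequality.

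To establish $n!\,(n+1)^{n+1} \leq (2n+1)!$, the key step is to factor the larger factorial by splitting off its top $n+1$ factors. One writes
\begin{align*}
 (2n+1)! = \Big(\textstyle\prod_{k=n+1}^{2n+1} k\Big)\, n!,
\end{align*}
and then observes that the product $\prod_{k=n+1}^{2n+1} k$ runs over exactly $(2n+1)-(n+1)+1 = n+1$ consecutive integers, each of which is at least $n+1$. Hence each of these $n+1$ factors can be bounded below by $n+1$, giving $\prod_{k=n+1}^{2n+1} k \geq (n+1)^{n+1}$ and therefore $(2n+1)! \geq (n+1)^{n+1}\, n!$, which is the desired inequality.

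I do not anticipate a genuine obstacle here; the argument is an elementary counting-and-estimation of factorial factors. The only point that requires a moment of care is getting the index count right, namely confirming that the block of integers from $n+1$ to $2n+1$ contains precisely $n+1$ terms, so that bounding each term below by its minimum value $n+1$ produces exactly the power $(n+1)^{n+1}$ needed to match the claim.
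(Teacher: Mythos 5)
Your proof is correct and follows essentially the same route as the paper: the paper reduces to the single inequality $n!\,(n+1)^{n+1} \leq (2n+1)!$ and simply asserts it, while you additionally supply the (correct) justification by splitting off the top $n+1$ factors of $(2n+1)!$ and bounding each below by $n+1$.
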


For the proof of our main result the following $\Ell^{2}$-estimates for the restriction of the modified error-terms $f_{n,\alpha}$ to the imaginary axis are crucial.

\begin{lemma}
\label{L2 estimates}
If $n \in \IN$ and $\alpha \in (\frac{1}{2}, n+\frac{1}{2}]$ then 
\begin{align*}
 \|f_{n,\alpha}(\i \, \cdot)\|_{\Ell^{2}(\IR)}  \leq \frac{4}{\sqrt{2\alpha-1}}(n+1)^{-\alpha+\frac{1}{2}}
\end{align*}
and
\begin{align*}
 \|(f_{n,\alpha}(\i \, \cdot))'\|_{\Ell^{2}(\IR)} 
\leq \left(\frac{8 \alpha}{(2\alpha+1)^{3/2}}+\frac{13^{\alpha}}{10^{\alpha}}\sqrt{\frac{5^{2 \alpha}}{6 \cdot 13^{2\alpha}} + \frac{360}{13(2\alpha - 1)}} \right) (n+1)^{-\alpha + \frac{1}{2}}.
\end{align*}

\end{lemma}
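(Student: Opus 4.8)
The plan is to derive both estimates from pointwise bounds on the imaginary axis, by combining the \emph{uniform} boundary bounds coming from $\mathcal{A}$-stability and Lemma~\ref{Pade-boundary-estimate} with the \emph{polynomial} Perron-based bounds of Lemma~\ref{error estimate}. In each case this produces an integrand of the form $\min\{\cdots\}$, to which Lemma~\ref{integrand estimate} applies with lower limit $w=0$; the resulting expression is then simplified using Lemma~\ref{factorial estimate} and the standing hypothesis $\alpha\leq n+\frac{1}{2}$. Throughout I abbreviate $c_n:=\frac{n!}{(2n+1)!}$ and use that the pointwise bounds depend only on $\abs{t}$, so that $\|g(\i\,\cdot)\|_{\Ell^2(\IR)}^2 = 2\int_0^\infty(\cdots)\,\d r$ after substituting $r=\abs{t}$.

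For the first estimate, the $\mathcal{A}$-stability of $r_n$ gives $\abs{r_n(-\i t)-\e^{-\i t}}\leq 2$, while Lemma~\ref{error estimate} gives $\abs{r_n(-\i t)-\e^{-\i t}}\leq \frac{1}{2}c_n^2\abs{t}^{2n+2}$. Hence
\[
 \abs{f_{n,\alpha}(\i t)}^2 \leq \min\Big\{4\abs{t}^{-2\alpha},\ \tfrac{1}{4}c_n^4\abs{t}^{4n+4-2\alpha}\Big\}.
\]
Pulling out a factor $\frac{1}{r}$ and applying Lemma~\ref{integrand estimate} with $U=\frac{1}{4}c_n^4$, $u=4n+5-2\alpha$, $V=4$, $v=2\alpha-1$ and $w=0$ (note $u>0$ since $\alpha\leq n+\frac{1}{2}$ and $v>0$ since $\alpha>\frac{1}{2}$) yields a bound involving $\big(\frac{c_n^4}{16}\big)^{\frac{2\alpha-1}{4n+4}}\frac{4n+4}{(4n+5-2\alpha)(2\alpha-1)}$. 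By Lemma~\ref{factorial estimate} the first factor is at most $(n+1)^{-(2\alpha-1)}$, and since $4n+5-2\alpha\geq 2n+4$ the fraction is at most $\frac{2}{2\alpha-1}$. Taking square roots gives the claimed bound $\frac{4}{\sqrt{2\alpha-1}}(n+1)^{-\alpha+\frac{1}{2}}$.

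For the derivative estimate I would first record the identity
\[
 f_{n,\alpha}'(z) = -\frac{r_n'(-z)-\e^{-z}}{z^\alpha} - \alpha\,\frac{r_n(-z)-\e^{-z}}{z^{\alpha+1}},
\]
so that $\abs{(f_{n,\alpha}(\i\,\cdot))'(t)}=\abs{f_{n,\alpha}'(\i t)}$ and, by the triangle inequality in $\Ell^2(\IR)$, the norm splits into a contribution of $\alpha\,f_{n,\alpha+1}(\i\,\cdot)$ and one of $(r_n'(-\i\,\cdot)-\e^{-\i\,\cdot})(\i\,\cdot)^{-\alpha}$. The first contribution is handled exactly as in the first part, but with $\alpha+1$ in place of $\alpha$: rerunning the computation gives $\alpha\|f_{n,\alpha+1}(\i\,\cdot)\|_{\Ell^2(\IR)}\leq \frac{4\alpha}{\sqrt{2\alpha+1}}(n+1)^{-\alpha-\frac{1}{2}}$, and the surplus factor $(n+1)^{-1}\leq \frac{2}{2\alpha+1}$ (again from $\alpha\leq n+\frac{1}{2}$) upgrades this to $\frac{8\alpha}{(2\alpha+1)^{3/2}}(n+1)^{-\alpha+\frac{1}{2}}$, which is exactly the first summand. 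For the second contribution I would combine the derivative bound of Lemma~\ref{error estimate}, namely $\abs{r_n'(-\i t)-\e^{-\i t}}\leq c_n^2\big(\frac{4}{5}\abs{t}^{2n+2}+(n+1)\abs{t}^{2n+1}\big)$, with the uniform bound $\abs{r_n'(-\i t)-\e^{-\i t}}\leq \abs{r_n'(-\i t)}+1\leq 3$ from Lemma~\ref{Pade-boundary-estimate}, and integrate the resulting $\min$-type integrand.

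I expect this last step to be the main obstacle. Unlike the first part, the increasing branch here is the \emph{sum} of two adjacent powers $\abs{t}^{2n+2}$ and $\abs{t}^{2n+1}$, the latter carrying a spurious factor $n+1$, so Lemma~\ref{integrand estimate} does not apply verbatim. The plan is to split the integral at the crossover radius where the polynomial bound meets the constant $3$, estimate the decreasing branch by $9\abs{t}^{-2\alpha}$ (producing the $\frac{1}{2\alpha-1}$ term), and bound the increasing branch by the square of the sum, integrating the three resulting monomials from $0$ to the crossover. Two points require care: first, the factor $n+1$ must be shown harmless, which works because $(n+1)^2c_n^4\leq (n+1)^{-4n-2}$ by Lemma~\ref{factorial estimate}, so that raising to the relevant fractional power again returns $(n+1)^{-(2\alpha-1)}$; second, the explicit bookkeeping of the crossover and of the factors $\big(\frac{U}{V}\big)^{v/(u+v)}$ is what produces the precise constants $\frac{13^\alpha}{10^\alpha}$, $\frac{5^{2\alpha}}{6\cdot 13^{2\alpha}}$ and $\frac{360}{13(2\alpha-1)}$ appearing under the square root. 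Collecting the two contributions and invoking $\alpha\leq n+\frac{1}{2}$ once more to tidy the leftover $(n+1)$-factors then yields the stated estimate.
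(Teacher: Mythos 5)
Your proposal follows essentially the same route as the paper: both parts rest on combining the $\mathcal{A}$-stability/boundary bounds with the Perron-based bounds of Lemma~\ref{error estimate} into $\min$-type integrands, applying Lemma~\ref{integrand estimate} and Lemma~\ref{factorial estimate}, and splitting the derivative as $-\i g_{n,\alpha}-\i\alpha h_{n,\alpha}$ with $h_{n,\alpha}=f_{n,\alpha+1}(\i\,\cdot)$ handled by the first part with $\alpha+1$. The one step you flag as the main obstacle is resolved in the paper by a small trick you essentially anticipate: since $\abs{t}^{2n+2-\alpha}\leq\abs{t}^{2n+1-\alpha}$ exactly for $\abs{t}\leq 1$, one splits the $g_{n,\alpha}$-integral at $t=1$ (rather than at the crossover), absorbs both powers into the single factor $L=n+\tfrac{9}{5}$ on each piece so that Lemma~\ref{integrand estimate} applies verbatim on $[1,\infty)$, and the constants $\tfrac{13}{10}$ arise from the elementary inequality $n+\tfrac{9}{5}\leq(\tfrac{13}{10})^{2n+2}$.
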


\begin{proof}
First, we prove the estimate for $\|f_{n,\alpha}(\i \, \cdot)\|_{\Ell^{2}(\IR)}$. By Lemma~\ref{error estimate} and $\mathcal{A}$-stability of $r_n$,
\begin{align*}
 \abs{f_{n,\alpha}(\i t)} \leq \min\left\{\frac{1}{2}\left(\frac{n!}{(2n+1)!}\right)^2 \abs{t}^{2n+2 - \alpha}, 2\abs{t}^{-\alpha} \right\} 
\end{align*}
for $t\in\IR\setminus \left\{0\right\}$, hence
\begin{align*}
 \|f_{n,\alpha}(\i \, \cdot)\|_{\Ell^{2}(\IR)}^2 &\leq 2\int_0^\infty \min \left\{\frac{1}{4}\left(\frac{n!}{(2n+1)!}\right)^4 \abs{t}^{4n+5 - 2\alpha}, 4\abs{t}^{-(2\alpha-1)} \right\} \, \frac{\d t}{t}.
\end{align*}
The latter integral fits perfectly into the setting of Lemma~\ref{integrand estimate}, yielding
\begin{align*}
 \|f_{n,\alpha}(\i \, \cdot)\|_{\Ell^{2}(\IR)}^2
 \leq \frac{1}{2\alpha-1} \frac{32(n+1)}{4n+5-2\alpha}\left( \frac{n!}{2(2n+1)!} \right)^{\frac{2\alpha-1}{n+1}}.
\end{align*}
Now, Lemma~\ref{factorial estimate} and $\alpha \leq n+\frac{3}{2}$ imply
\begin{align*}
 \|f_{n,\alpha}(\i \, \cdot)\|_{\Ell^{2}(\IR)}^2
  \leq \frac{16}{2\alpha-1} \left( \frac{n!}{2(2n+1)!} \right)^{\frac{2\alpha-1}{n+1}}
 \leq \frac{16}{2\alpha-1} (n+1)^{-2 \alpha +1}.
\end{align*}
This proves the first estimate. 

For the proof of the second estimate we begin by calculating
\begin{align}
\label{L2 estimate: decomposition}
 \frac{\d}{\d t} f_{n,\alpha}(\i t) = - \i \frac{r'_n(-\i t) - \e^{-\i t}}{(it)^\alpha} - \i \alpha \frac{r_n(-\i t) - \e^{-\i t}}{(it)^{\alpha+1}} =: -\i g_{n,\alpha}(t) - \i \alpha h_{n,\alpha}(t)
\end{align}
for $t \neq 0$. Note that $h_{n,\alpha}(\cdot) = f_{n,\alpha+1}(\i \, \cdot)$. Since we have only used $\alpha \leq n+\frac{3}{2}$ to prove the first part of this lemma, we can apply the first part with $\alpha + 1$ in place of $\alpha$ to find
\begin{align}
\label{estimate hna}
 \|h_{n,\alpha}\|_{\Ell^{2}(\IR)}  \leq \frac{4}{\sqrt{2\alpha+1}}(n+1)^{-\alpha-\frac{1}{2}} \leq \frac{8}{(2\alpha+1)^{3/2}}(n+1)^{-\alpha+\frac{1}{2}},
\end{align}
where the second step is valid since $\alpha \in(\frac{1}{2}, n+\frac{1}{2}]$.

As for $g_{n,\alpha}$, combining Lemma~\ref{Pade-boundary-estimate} and Lemma~\ref{error estimate} yields
\begin{align*}
 \abs{g_{n,\alpha}(t)} \leq \min \bigg \{\bigg(\frac{n!}{(2n+1)!}\bigg)^2 \bigg(\frac{4}{5}\abs{t}^{2n+2-\alpha} + (n+1)\abs{t}^{2n+1-\alpha}\bigg),  3\abs{t}^{-\alpha}  \bigg \} \, \d t
\end{align*}
for $t \neq 0$. Substitute $K:=\big(\frac{n!}{(2n+1)!}\big)^2$ and $L:= n + \frac{9}{5}$. As $\abs{t}^{2n+2-\alpha} \leq \abs{t}^{2n+1-\alpha}$ if and only if $\abs{t} \leq 1$,  the quantity $\|g_{n,\alpha}\|_{\Ell^{2}(\IR)}^2$ can be estimated from above by
\begin{align}\label{integral quantity}
2 \int_0^1 K^2 L^2 t^{4n+2-2\alpha} \, \d t
+ 2 \int_1^\infty \min \Big \{ K^2 L^2 t^{4n+5-2\alpha},  9t^{-(2\alpha-1)}  \Big \} \, \frac{\d t}{t}.
\end{align}
Using Lemma~\ref{factorial estimate},  $\alpha \leq n+\frac{1}{2}$ and $\frac{n + \frac{9}{5}}{n+1} \leq \frac{7}{5}$ in sequence yields
\begin{align}
\label{K2L2 estimate}
 \frac{K^2L^2}{3(n+1)} 
\leq \frac{\left(n+\frac{9}{5}\right)^2}{3(n+1)^{4n+5}}
\leq \frac{2}{3(n+1)^{4\alpha + 1}}
\leq \frac{1}{3 \cdot 2^{2\alpha+1}(n+1)^{2\alpha - 1}}.
\end{align}
In particular, $9K^{-2}L^{-2} \geq \tfrac{9}{2}(n+1)^{4\alpha} \geq 1$, which in turn allows us to compute the second integral in \eqref{integral quantity} by means of Lemma~\ref{integrand estimate}. This yields
\begin{align*}
\|g_{n,\alpha}\|_{\Ell^{2}(\IR)}^2
\leq \frac{2K^2L^2}{4n+3-2\alpha} + \frac{18}{2\alpha-1} \bigg(\frac{KL}{3}\bigg)^{\frac{2\alpha - 1}{2n+2}} \frac{4n+4}{4n+5-2\alpha}-\frac{2K^{2}\Ell^{2}}{4n+5-2\alpha},
\end{align*}
which after simplifying, using $\alpha \in(\frac{1}{2},n+\frac{1}{2}]$ and \eqref{K2L2 estimate}, allows to estimate $\|g_{n,\alpha}\|_{\Ell^{2}(\IR)}^2$ from above by
\begin{align*}
\frac{K^2L^2}{3(n+1)} + \frac{36}{2\alpha -1} \bigg(\frac{KL}{3}\bigg)^{\frac{2\alpha - 1}{2n+2}}\leq \frac{1}{3\cdot 2^{2\alpha+1}}n^{-2\alpha+1}+\frac{36}{2\alpha -1} \bigg(\frac{KL}{3}\bigg)^{\frac{2\alpha - 1}{2n+2}}.
\end{align*}
For the second term, Lemma~\ref{factorial estimate} and the inequality $n + \frac{9}{5} \leq \left(\frac{13}{10}\right)^{2n+2}$, which can easily be verified by induction, yield
\begin{align*}
 \bigg(\frac{KL}{3}\bigg)^{\frac{1}{2n+2}} = \bigg(\frac{n!}{\sqrt{3}(2n+1)!}\bigg)^{\frac{1}{n+1}} \bigg(n + \frac{9}{5}\bigg)^{\frac{1}{2n+2}} \leq \frac{13}{10} \frac{1}{n+1}.
\end{align*}
Hence
\begin{align*}
 \|g_{n,\alpha}\|_{\Ell^{2}(\IR)}^2 
 &\leq \frac{1}{3 \cdot 2^{2\alpha+1}}(n+1)^{-2\alpha+1}+\frac{36}{2\alpha-1}\left(\frac{13}{10}\right)^{2\alpha-1}(n+1)^{-2\alpha+1}\\
 &= \frac{13^{2\alpha}}{10^{2 \alpha}}\left(\frac{5^{2 \alpha}}{6 \cdot 13^{2\alpha}} + \frac{360}{13(2\alpha - 1)}\right) (n+1)^{-2\alpha + 1}.
\end{align*}
The conclusion follows by combining the estimate above, \eqref{estimate hna} and \eqref{L2 estimate: decomposition}.
\end{proof}

\section{Precise formulation and proof of the main results}
\label{Proofs}

\subsection{Uniformly bounded semigroups}

The following is our main result for uniformly bounded semigroups.

\begin{theorem}[Convergence for uniformly bounded semigroups]
\label{main result uniformly bounded}
Let $(T(t))_{t \geq 0}$ be a $C_{0}$-semigroup of type $(M,0)$ on a Banach space $X$, with generator $-A$. Let $\alpha > \frac{1}{2}$ and $x\in \D(A^\alpha)$ be given. Then 
\begin{align}
\label{estimate uniformly bounded}
\left \|r_n(-tA)x - T(t)x \right \| \leq C(\alpha) M t^{\alpha} (n+1)^{-\alpha + \frac{1}{2}} \|A^\alpha x\|
\end{align}
for all $t\geq 0$ and $n\in\IN$ with $n\geq \alpha-\frac{1}{2}$. Here, 
\begin{align}
\label{C(alpha)}
 C(\alpha) := \frac{\sqrt{2}}{(2\alpha-1)^{1/4}}\sqrt{\frac{8 \alpha}{(2\alpha+1)^{3/2}}+\frac{13^{\alpha}}{10^{\alpha}}\sqrt{\frac{5^{2 \alpha}}{6 \cdot 13^{2\alpha}} + \frac{360}{13(2\alpha - 1)}}}.
\end{align}

In particular, for each $\alpha>\frac{1}{2}$ the sequence $(r_n(-tA))_{n \in \IN}$ converges strongly on $\D(A^\alpha)$ and locally uniformly in $t\geq 0$ to $T(t)$ with rate $\mathcal{O}(n^{-\alpha + \frac{1}{2}})$.
\end{theorem}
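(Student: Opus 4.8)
The plan is to reduce the operator estimate to the scalar $\Ell^{2}$-bounds of Lemma~\ref{L2 estimates} via the functional calculus. Writing $T(t)=\e^{-tA}$, the starting point is the factorization $r_n(-w)-\e^{-w}=w^{\alpha}f_{n,\alpha}(w)$ built into the definition \eqref{important function}, which upon inserting $w=tz$ reads $r_n(-tz)-\e^{-tz}=t^{\alpha}z^{\alpha}f_{n,\alpha}(tz)$. Applying the (extended) Hille-Phillips calculus of $A$ and using that $f_{n,\alpha}(tA)$ will turn out to be bounded, hence commutes with $A^{\alpha}$ on $\D(A^{\alpha})$, I expect to obtain for every $x\in\D(A^{\alpha})$ the identity
\begin{align*}
 r_n(-tA)x-T(t)x=t^{\alpha}f_{n,\alpha}(tA)A^{\alpha}x,
\end{align*}
so that $\|r_n(-tA)x-T(t)x\|\leq t^{\alpha}\|f_{n,\alpha}(tA)\|\,\|A^{\alpha}x\|$. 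The whole problem is thereby shifted to bounding the operator norm of $f_{n,\alpha}(tA)$ uniformly in $t$.

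To estimate $\|f_{n,\alpha}(tA)\|$ I would realize $f_{n,\alpha}(t\,\cdot)$ inside the Hille-Phillips calculus. By Lemma~\ref{multiplicator estimate} the function $f_{n,\alpha}$ is bounded and holomorphic on $\IC_{+}$ (the singularity at $0$ is removable since $\alpha<2n+2$), and by Lemma~\ref{L2 estimates} the trace of $f_{n,\alpha}(\i t\,\cdot)$ on the imaginary axis and the trace of its derivative lie in $\Ell^{2}(\IR)$. The plan is to show that the inverse Fourier transform $\phi:=\mathcal{F}^{-1}(f_{n,\alpha}(\i t\,\cdot))$ is an $\Ell^{1}$-function supported in $\IR_{+}$; a Paley-Wiener argument applies here, since $f_{n,\alpha}(t\,\cdot)$ is bounded and holomorphic on $\IC_{+}$ with $\Ell^{2}$ boundary values, so that holomorphy on $\IC_{+}$ (not merely boundary behavior) forces the support to the half-line. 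Consequently $f_{n,\alpha}(t\,\cdot)=\widehat{\mu}$ with $\mu=\phi\,\d s\in\mathbf{M}(\IR_{+})$, and the Hille-Phillips calculus gives $\|f_{n,\alpha}(tA)\|\leq M\|\mu\|_{\mathbf{M}(\IR_{+})}=M\|\phi\|_{\Ell^{1}(\IR_{+})}$.

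Bounding $\|\phi\|_{\Ell^{1}}$ is the heart of the matter, and here I would use the classical device: for any $\lambda>0$, Cauchy-Schwarz gives
\begin{align*}
 \int_{0}^{\infty}\abs{\phi(s)}\,\d s\leq\Big(\int_{0}^{\infty}\frac{\d s}{\lambda^{2}+s^{2}}\Big)^{1/2}\Big(\int_{0}^{\infty}(\lambda^{2}+s^{2})\abs{\phi(s)}^{2}\,\d s\Big)^{1/2},
\end{align*}
where the first factor equals $\sqrt{\pi/(2\lambda)}$ precisely because $\phi$ is supported in $\IR_{+}$; this halving of the integral is exactly what produces the numerator $\sqrt{2}$ in the constant \eqref{C(alpha)}. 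Optimizing over $\lambda$ and invoking Plancherel's theorem to pass from $\|\phi\|_{\Ell^{2}}$ and $\|s\phi\|_{\Ell^{2}}$ to $\|f_{n,\alpha}(\i t\,\cdot)\|_{\Ell^{2}(\IR)}$ and $\|(f_{n,\alpha}(\i t\,\cdot))'\|_{\Ell^{2}(\IR)}$ leads to $\|\phi\|_{\Ell^{1}(\IR_{+})}\leq\tfrac{1}{\sqrt{2}}\|f_{n,\alpha}(\i t\,\cdot)\|_{\Ell^{2}(\IR)}^{1/2}\|(f_{n,\alpha}(\i t\,\cdot))'\|_{\Ell^{2}(\IR)}^{1/2}$. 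A substitution $s\mapsto ts$ shows that this geometric mean is scale-invariant, so the $t$-dependence cancels and one may replace $t$ by $1$; the only surviving $t$ is the prefactor $t^{\alpha}$ from the factorization. Feeding in the two estimates of Lemma~\ref{L2 estimates} then reproduces exactly the constant $C(\alpha)$ and the power $(n+1)^{-\alpha+\frac12}$, valid for $n\geq\alpha-\tfrac12$ as required by the hypothesis $\alpha\leq n+\tfrac12$ of that lemma. Local uniform convergence is then immediate, since on any interval $[0,\tau]$ the factor $t^{\alpha}$ is dominated by $\tau^{\alpha}$ while $(n+1)^{-\alpha+\frac12}\to0$.

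The step I expect to be most delicate is the rigorous identification of $f_{n,\alpha}(tA)$ with $\widehat{\mu}(A)$: one must verify both that $f_{n,\alpha}(t\,\cdot)$ genuinely lies in $\widehat{\mathbf{M}(\IR_{+})}$, via the support-in-$\IR_{+}$ claim, and that the factorization identity of the first paragraph is legitimate. The latter rests on the multiplicativity of the extended calculus, its compatibility with the fractional powers $A^{\alpha}$, and the boundedness of $f_{n,\alpha}(tA)$ that permits the commutation with $A^{\alpha}$ on $\D(A^{\alpha})$. Everything else is bookkeeping of constants.
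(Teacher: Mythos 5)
Your proposal is correct and follows essentially the same route as the paper: the same factorization through $f_{n,\alpha}$, the same reduction via the Hille--Phillips calculus to an $\Ell^{1}(\IR_{+})$-bound for the inverse Fourier transform of the boundary trace, and the same $\Ell^{2}$-estimates of Lemma~\ref{L2 estimates}; your Cauchy--Schwarz-plus-optimization step is precisely a proof of the half-line Carlson inequality that the paper cites in Lemma~\ref{paley-wiener lemma}. The only cosmetic difference is that you exploit scale invariance to reduce to $t=1$, whereas the paper carries the factor $t^{\alpha}$ through the function $f(z)=t^{\alpha}f_{n,\alpha}(tz)$ directly.
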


Having at hand the technical estimates from Section~\ref{Technical estimates}, the main step towards a proof of Theorem~\ref{main result uniformly bounded} is to identify the modified error term as the Laplace transform of an $\Ell^{1}(\IR_+)$-function provided that $\alpha > \frac{1}{2}$. The proof of Theorem~\ref{main result uniformly bounded} will then be a straightforward application of the Hille-Phillips calculus.

\begin{lemma}
\label{paley-wiener lemma}
Let $f\in\HT^{\infty}\!(\IC_{+}) \cap \Ce(\overline{\IC_+})$ be such that $f(z) \in \mathcal{O}(\abs{z}^{-a})$ as $\abs{z} \to \infty$ for some $a>\frac{1}{2}$. Then there is a function $g \in \Ell^{2}(\IR_{+})$ such that 
\begin{align*}
 f = \hat{g} \quad \text{and} \quad \|g\|_{\Ell^{2}(\IR_{+})} = \frac{1}{\sqrt{2 \pi}}\|f(\i \, \cdot)\|_{\Ell^{2}(\IR)}.
\end{align*}
If in addition $f(\i \, \cdot)$ is differentiable and the derivative $(f(\i \, \cdot))'$ belongs to $\Ell^{2}(\IR)$, then $g \in \Ell^{1}(\IR_+)$ with 
\begin{align*}
 \|g\|_{\Ell^{1}(\IR_+)} \leq \frac{1}{\sqrt{2}} \|f(\i \, \cdot)\|_{\Ell^{2}(\IR)}^{\frac{1}{2}} \|(f(\i \, \cdot))'\|_{\Ell^{2}(\IR)}^{\frac{1}{2}}.
\end{align*}
\end{lemma}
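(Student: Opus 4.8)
The plan is to read the statement as a Paley--Wiener theorem for the right half-plane, supplemented by a weighted Cauchy--Schwarz argument for the $\Ell^{1}$-bound. First I would record the pointwise decay: since $f\in\HT^{\infty}\!(\IC_{+})$ is bounded and $f(z)\in\mathcal{O}(\abs{z}^{-a})$, there is $C>0$ with $\abs{f(z)}\leq C(1+\abs{z})^{-a}$ for all $z\in\overline{\IC_{+}}$. Because $\abs{\sigma+\i t}\geq\abs{t}$, this gives $\abs{f(\sigma+\i t)}\leq C(1+\abs{t})^{-a}$ uniformly in $\sigma\geq 0$, and as $2a>1$ the integral $\int_{\IR}(1+\abs{t})^{-2a}\,\d t$ is finite. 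Hence $f(\i\,\cdot)\in\Ell^{2}(\IR)$ and $\sup_{\sigma>0}\int_{\IR}\abs{f(\sigma+\i t)}^{2}\,\d t<\infty$, i.e.\ $f$ belongs to the Hardy space $\HT^{2}(\IC_{+})$.

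For the first assertion I would invoke the Paley--Wiener theorem: a function in $\HT^{2}(\IC_{+})$ is precisely the Laplace transform of some $g\in\Ell^{2}(\IR_{+})$, so $f=\hat{g}$. On the imaginary axis $\hat{g}(\i\xi)=\int_{0}^{\infty}\e^{-\i\xi t}g(t)\,\d t=\mathcal{F}\tilde{g}(\xi)$, where $\tilde{g}$ denotes $g$ extended by $0$ to $\IR$; since $f\in\Ce(\overline{\IC_{+}})$, this boundary trace is literally $f(\i\,\cdot)$. Plancherel's theorem then yields $\|f(\i\,\cdot)\|_{\Ell^{2}(\IR)}=\sqrt{2\pi}\,\|\tilde{g}\|_{\Ell^{2}(\IR)}=\sqrt{2\pi}\,\|g\|_{\Ell^{2}(\IR_{+})}$, which is the claimed identity.

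For the second assertion I would first translate the hypothesis $(f(\i\,\cdot))'\in\Ell^{2}(\IR)$ into a moment bound on $g$. Together with $f(\i\,\cdot)\in\Ell^{2}(\IR)$ this places $f(\i\,\cdot)$ in the Sobolev space $\HT^{1}(\IR)$, and passing through $\mathcal{F}^{-1}$ converts the derivative into multiplication by the variable: $s\mapsto s\,g(s)$ lies in $\Ell^{2}(\IR_{+})$ with $\|s\,g\|_{\Ell^{2}(\IR_{+})}=\tfrac{1}{\sqrt{2\pi}}\|(f(\i\,\cdot))'\|_{\Ell^{2}(\IR)}$ (the boundary terms in the underlying integration by parts vanish by the decay of $f$). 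For $\lambda>0$ a weighted Cauchy--Schwarz inequality then gives
\begin{align*}
 \|g\|_{\Ell^{1}(\IR_{+})}\leq\Big(\int_{0}^{\infty}(\lambda^{2}+s^{2})\abs{g(s)}^{2}\,\d s\Big)^{1/2}\Big(\int_{0}^{\infty}\frac{\d s}{\lambda^{2}+s^{2}}\Big)^{1/2}=\Big(\lambda^{2}\|g\|_{2}^{2}+\|s g\|_{2}^{2}\Big)^{1/2}\Big(\frac{\pi}{2\lambda}\Big)^{1/2},
\end{align*}
and minimizing the right-hand side over $\lambda$ (the optimum being $\lambda=\|s g\|_{2}/\|g\|_{2}$) produces the sharp Carlson-type bound $\|g\|_{\Ell^{1}(\IR_{+})}\leq\sqrt{\pi}\,\|g\|_{2}^{1/2}\|s g\|_{2}^{1/2}$. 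Substituting the two Plancherel identities turns the prefactor into $\sqrt{\pi}\cdot\tfrac{1}{\sqrt{2\pi}}=\tfrac{1}{\sqrt{2}}$, which is exactly the asserted constant.

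The main obstacle is the Paley--Wiener step: the genuinely non-trivial point is not the norm identity but that holomorphy and boundedness on $\IC_{+}$ force the inverse Fourier transform of the boundary function to be supported on $[0,\infty)$ and to reproduce $f$ on the whole half-plane, rather than only on $\i\IR$; a self-contained argument would shift the contour of the inversion integral and let $\Real z\to\infty$, or one simply cites the classical theorem. A secondary point worth care is that the sharp constant $\tfrac{1}{\sqrt{2}}$ genuinely requires the weighted Cauchy--Schwarz with weight $(\lambda^{2}+s^{2})$ followed by optimization; the naive split of $\int_{0}^{\infty}\abs{g}$ at a cut-off only yields the weaker constant $\tfrac{2}{\sqrt{2\pi}}$.
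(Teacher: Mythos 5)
Your proposal is correct and follows essentially the same route as the paper: decay plus boundedness gives $f\in\HT^{2}(\IC_{+})$, the Paley--Wiener theorem produces $g\in\Ell^{2}(\IR_{+})$ with the Plancherel identity, and the $\Ell^{1}$-bound is Carlson's inequality for half-line supported functions combined with $(\mathcal{F}^{-1}(f(\i\,\cdot)'))(s)=-\i s\,g(s)$. The only difference is that the paper cites Carlson's inequality from Beckenbach--Bellman, whereas you rederive it via the weighted Cauchy--Schwarz argument with weight $\lambda^{2}+s^{2}$ and optimization in $\lambda$; this is a valid, self-contained substitute yielding the same sharp constant $\sqrt{\pi}$, hence the same final factor $\tfrac{1}{\sqrt{2}}$.
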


\begin{proof}
From the assumption that $f(z)\in \mathcal{O}(\abs{z}^{-a})$ as $\abs{z} \to \infty$ it follows that
\begin{align*}
 f\in\HT^2(\IC_+) := \Big \{h: \IC_+ \to \IC \, \text{hol.} \, \Big | \, \|h\|_{\HT^2(\IC_+)} := \sup_{x>0} \frac{1}{\sqrt{2 \pi}} \|h(x + \i \, \cdot)\|_{\Ell^{2}(\IR)} < \infty \Big \}.
\end{align*}
Hence the first assertion follows from the Paley-Wiener Theorem \cite[Thm.~19.2]{RudinRC}. Also note that the extension of $g$ to the whole real axis by zero is $\mathcal{F}^{-1}(f(\i \, \cdot))$.

Now, suppose in addition that $h(\cdot):=f(\i \, \cdot)$ is differentiable with derivative in $\Ell^{2}(\IR)$. Plancherel's theorem implies
\begin{align*}
 \mathcal{F}^{-1}h \in \Ell^{2}(\IR) \quad \text{and} \quad \Big(\xi \mapsto -\i \xi (\mathcal{F}^{-1}h)(\xi) = (\mathcal{F}^{-1}h')(\xi)\Big) \in \Ell^{2}(\IR).
\end{align*}
Therefore Carlson's inequality \cite[p.175]{Beckenbach-Bellman} yields $\mathcal{F}^{-1}h \in \Ell^{1}(\IR)$ with the estimate
\begin{align*}
 \|\mathcal{F}^{-1}h \|_{\Ell^{1}(\IR)} \leq \sqrt{\pi} \|\mathcal{F}^{-1}h\|_{\Ell^{2}(\IR)}^{\frac{1}{2}} \|\mathcal{F}^{-1}h'\|_{\Ell^{2}(\IR)}^{\frac{1}{2}}.
\end{align*}
Thus by definition of $h$ and Plancherel's theorem
\begin{align*}
 \|g\|_{\Ell^{1}(\IR_{+})}=\|\mathcal{F}^{-1}(f(\i \, \cdot)) \|_{\Ell^{1}(\IR)} \leq \frac{1}{\sqrt{2}}\|f(\i \, \cdot)\|_{\Ell^{2}(\IR)}^{\frac{1}{2}} \|(f(\i \, \cdot))'\|_{\Ell^{2}(\IR)}^{\frac{1}{2}}. & \qedhere
\end{align*}
\end{proof}

\begin{remark}
Lemma~\ref{paley-wiener lemma} is very similar to Lemmas 1 and 2 in \cite{Brenner-Thomee}. However, note that our version is tailored to the Laplace transform since it uses $\supp(\mathcal{F}^{-1}h) \subseteq \IR_+$. This results in a constant smaller by a factor of $\frac{1}{2}$ in Carlson's inequality and consequently also in the $\Ell^{1}$-estimate for $g$.
\end{remark}

\begin{proof}[Proof of Theorem~\ref{main result uniformly bounded}]
Fix $t$ and $n$ as required. Note that $z\mapsto r_{n}(-tz)$ is the Laplace transform of a bounded measure by Lemma~\ref{paley-wiener lemma} and that $z\mapsto e^{-tz}$ is the Laplace transform of unit point mass at $t$. Hence $r_{n}(-tA)=r_{n}(-t \, \cdot)(A)$ and $\e^{-t \, \cdot}(A)=T(t)$ are well-defined in the Phillips calculus for $A$. Set
\begin{align}\label{function definition}
 f(z):= t^\alpha f_{n,\alpha}(tz)=\frac{r_n(-tz) - \e^{-tz}}{z^\alpha} \quad \quad (z \in \overline{\IC_+}\setminus\{0\})
\end{align}
with $f_{n,\alpha}$ as in \eqref{important function}. Then $f \in \HT^{\infty}\!(\IC_{+}) \cap \Ce(\overline{\IC_+})$ by Lemma~\ref{multiplicator estimate} and $f(z) \in \mathcal{O}(\abs{z}^{-\alpha})$ as $\abs{z}~\to~\infty$ thanks to the $\mathcal{A}$-stability of $r_n$. Moreover, $f(\i \, \cdot)$ is differentiable, with derivative in $\Ell^{2}(\IR)$ by Lemma~\ref{L2 estimates}. Thus Lemma~\ref{paley-wiener lemma} yields a function $g \in \Ell^{1}(\IR_{+})$ such that $f = \hat{g}$ and
\begin{align*}
 \|g\|_{\Ell^{1}(\IR_+)} \leq \frac{1}{\sqrt{2}} \|f(\i \, \cdot)\|_{\Ell^{2}(\IR)}^{\frac{1}{2}} \|(f(\i \, \cdot))'\|_{\Ell^{2}(\IR)}^{\frac{1}{2}} = \frac{t^\alpha}{\sqrt{2}} \|f_{n,\alpha}(\i \, \cdot)\|_{\Ell^{2}(\IR)}^{\frac{1}{2}} \|(f_{n,\alpha}(\i \, \cdot))'\|_{\Ell^{2}(\IR)}^{\frac{1}{2}}.
\end{align*}
By the Hille-Phillips calculus,
\begin{align}
\label{HP representation error}
(r_n(-tA) - T(t))x = f(A)A^\alpha x = \hat{g}(A)A^\alpha x = \int_0^\infty g(s)T(s)A^\alpha x \, \d s.
\end{align}
Hence, by taking norms,
\begin{align*}
\|r_n(-tA)x - T(t)x\| \leq \frac{M t^\alpha}{\sqrt{2}} \|f_{n,\alpha}(\i \, \cdot)\|_{\Ell^{2}(\IR)}^{\frac{1}{2}} \|(f_{n,\alpha}(\i \, \cdot))'\|_{\Ell^{2}(\IR)}^{\frac{1}{2}} \|A^\alpha x\|.
\end{align*}
The conclusion follows by substituting the estimates from Lemma~\ref{Pade-boundary-estimate} for the $\Ell^{2}(\IR)$-norms on the right-hand side of the inequality above.
\end{proof}

\begin{remark}
\label{Special values of C(alpha)}
With a view towards the problem from the introduction it might be interesting to have a rough estimate of the size of $C(\alpha)$ for $\alpha = 1$ and other small values. As can be shown by explicit computations,
\begin{align*}
 C(1) \leq 4.10, \quad C(2) \leq 2.76, \quad C(3) \leq 2.41, \quad C(4)\leq 2.28.
\end{align*}
\end{remark}

\begin{remark}
Suppose that, in the setting of Theorem~\ref{main result uniformly bounded}, $T=(T(t))_{t \geq 0}$ is exponentially stable. Proceeding similar to the proof above we can, depending on the type of $T$, provide a sharper upper bound for the approximation error. To be precise, let $T$ be of type $(M,-\omega)$ for some $\omega > 0$. With $f$, $f_{n,\alpha}$ and $g$ as in the proof of Theorem~\ref{main result uniformly bounded}, Lemma~\ref{paley-wiener lemma} implies
\begin{align*}
 \|g\|_{\Ell^{2}(\IR_+)} = \frac{1}{\sqrt{2 \pi}}\|f(\i \, \cdot)\|_{\Ell^{2}(\IR)} = \frac{t^{\alpha - \frac{1}{2}}}{\sqrt{2 \pi}} \|f_{n,\alpha}(\i \, \cdot)\|_{\Ell^{2}(\IR)}.
\end{align*}
Taking norms in \eqref{HP representation error} and applying H\"older's inequality yields
\begin{align*}
 \|r_n(-tA)x - T(t)x\| \leq M\|ge^{-\omega\cdot}\|_{\Ell^{1}(\IR)}\leq \frac{M}{\sqrt{4 \pi \omega}} \frac{4}{\sqrt{2\alpha-1}} t^{\alpha - \frac{1}{2}} (n+1)^{-\alpha+\frac{1}{2}} \|A^\alpha x\|
\end{align*}
for all $\alpha > \frac{1}{2}$, $n \geq \alpha - \frac{1}{2}$, $t \geq 0$ and $x \in \D(A^\alpha)$.
\end{remark}

\subsection{Analytic semigroups}

For $A$ a sectorial operator of angle $\varphi\in(0,\pi)$ and $\nu\in(\varphi,\pi)$, let
\begin{align}
\label{A sectorial}
 M_{\nu}:=\sup_{\lambda \in \partial \Sigma_\nu} \|\lambda R(\lambda,A)\|<\infty.
\end{align}
For bounded analytic semigroups the following improvement of Theorem~\ref{main result uniformly bounded} holds.

\begin{theorem}[Convergence for bounded analytic semigroups]
\label{1st theorem}
Let $A$ be a sectorial operator of angle $\varphi\in(0,\frac{\pi}{2})$ on a Banach space $X$ and let $(T(t))_{t \geq 0}$ be the bounded analytic $C_{0}$-semigroup generated by $-A$. Let $\alpha >0$ and $x\in\D(A^{\alpha})$ be given. Then 
\begin{align*}
\left \|r_n(-tA)x - T(t)x \right \| \leq \frac{4M_{\nu}}{\alpha\pi} t^{\alpha} (n+1)^{-\alpha} \|A^\alpha x\|
\end{align*}
for all $\nu \in (\varphi, \frac{\pi}{2})$, $t \geq 0$ and $n \in \IN$ such that $n \geq \alpha - 1$.

In particular, for each $\alpha>0$ the sequence $(r_n(-tA))_{n \in \IN}$ converges strongly on $\D(A^\alpha)$ and locally uniformly in $t\geq 0$ to $T(t)$ with rate $\mathcal{O}(n^{-\alpha})$.
\end{theorem}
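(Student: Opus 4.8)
The plan is to replace the imaginary-axis $\Ell^{2}$-machinery of Theorem~\ref{main result uniformly bounded} by a contour integral over the sector boundary $\partial\Sigma_\nu$, exploiting that $A$ is sectorial of angle $\varphi<\frac{\pi}{2}$. For any $\nu\in(\varphi,\frac{\pi}{2})$ this contour stays strictly inside $\IC_+$, so the error estimates of Lemma~\ref{error estimate} remain valid along it, while the resolvent bound $\|R(z,A)\|\le M_\nu/\abs{z}$ on $\partial\Sigma_\nu$ turns the whole error into an $\Ell^{1}$-type integral that converges for every $\alpha>0$, thereby removing the threshold $\alpha>\frac{1}{2}$ of the uniformly bounded case.

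First I would reuse the function $f(z):=t^\alpha f_{n,\alpha}(tz)=(r_n(-tz)-\e^{-tz})/z^\alpha$ from \eqref{function definition} and verify that $f\in\HT_0^\infty(\Sigma_\psi)$ for some $\psi\in(\nu,\frac{\pi}{2})$. Near the origin the defining property of the Pad\'e approximant gives $f(z)=\mathcal{O}(\abs{z}^{2n+2-\alpha})$, and here the hypothesis $n\ge\alpha-1$ guarantees $2n+2-\alpha>0$; at infinity the $\mathcal{A}$-stability of $r_n$ together with $\abs{\e^{-tz}}\le1$ on $\Sigma_\psi\subseteq\IC_+$ yields the decay $f(z)=\mathcal{O}(\abs{z}^{-\alpha})$. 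Hence $f(A)$ is a bounded operator furnished by the holomorphic functional calculus, and — since this calculus is consistent with the Hille-Phillips calculus for the generator of a bounded analytic semigroup — the product rule gives, exactly as in the proof of Theorem~\ref{main result uniformly bounded}, the identity $(r_n(-tA)-T(t))x=f(A)A^\alpha x$ for $x\in\D(A^\alpha)$. It therefore suffices to estimate $\|f(A)\|$.

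Next I would insert the Cauchy representation $f(A)=\frac{1}{2\pi\i}\int_{\partial\Sigma_\nu}f(z)R(z,A)\,\d z$, take norms, and use $\|R(z,A)\|\le M_\nu/\abs{z}$ to reduce matters to $\int_{\partial\Sigma_\nu}\abs{f(z)}\,\abs{\d z}/\abs{z}$. Parametrising the two rays by $z=r\e^{\pm\i\nu}$ and combining the bound $\abs{r_n(-tz)-\e^{-tz}}\le\frac12\big(\frac{n!}{(2n+1)!}\big)^2\abs{tz}^{2n+2}$ of Lemma~\ref{error estimate} with the $\mathcal{A}$-stability bound $\abs{r_n(-tz)-\e^{-tz}}\le2$ produces the pointwise estimate $\abs{f(r\e^{\pm\i\nu})}\le\min\{Kr^{2n+2-\alpha},2r^{-\alpha}\}$, where $K:=\frac12\big(\frac{n!}{(2n+1)!}\big)^2t^{2n+2}$. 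This is precisely the integrand handled by Lemma~\ref{integrand estimate}.

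Finally, applying Lemma~\ref{integrand estimate} with $w=0$, $u=2n+2-\alpha$ and $v=\alpha$, then simplifying with Lemma~\ref{factorial estimate} and the inequality $\frac{2n+2}{2n+2-\alpha}\le2$ (again a consequence of $n\ge\alpha-1$), I expect $\int_0^\infty\abs{f(r\e^{\i\nu})}\,\d r/r\le\frac{4}{\alpha}t^\alpha(n+1)^{-\alpha}$; doubling for the two conjugate rays and dividing by $2\pi$ produces exactly $\|f(A)\|\le\frac{4M_\nu}{\alpha\pi}t^\alpha(n+1)^{-\alpha}$, and multiplying by $\|A^\alpha x\|$ concludes. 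The bulk of the argument is a careful but routine calculation built on the lemmas of Section~\ref{Technical estimates}; the only genuine points to watch are the membership $f\in\HT_0^\infty(\Sigma_\psi)$ on a sector strictly larger than the integration contour and the compatibility of the two functional calculi guaranteeing $f(A)A^\alpha x=r_n(-tA)x-T(t)x$. Everything else, including the appearance of the precise constant $\frac{4}{\alpha\pi}$, falls out of the integrand estimate.
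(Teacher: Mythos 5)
Your proposal is correct and follows essentially the same route as the paper: membership of the modified error term in $\HT_0^\infty(\Sigma_\psi)$, the Cauchy integral over $\partial\Sigma_\nu$ with the sectorial resolvent bound, and then Lemmas~\ref{integrand estimate} and \ref{factorial estimate} to produce the constant $\frac{4M_\nu}{\alpha\pi}$. The only cosmetic difference is that the paper first reduces to $t=1$ by replacing $A$ with $tA$, whereas you carry the factor $t^\alpha$ through the computation directly.
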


\begin{proof}
Fix $\nu\in(\varphi,\frac{\pi}{2})$ and $n \geq \alpha - 1$. Observe that for $t=0$ the statement is trivial, whereas for $t>0$ the operator $tA$ is also sectorial of angle $\varphi$. Hence, by replacing $A$ by $tA$ and noting that the value of $M_{\nu}$ does not change under this replacement, it suffices to give a proof for $t = 1$.

To this end, let $\psi \in (\nu, \frac{\pi}{2})$ and let $f_{n,\alpha}$ be as in \eqref{important function}. Lemma~\ref{error estimate} and $\mathcal{A}$-stability of $r_n$ yield
\begin{align}
\label{Th1:Eq1}
 \abs{f_{n,\alpha}(z)} \leq \min\left\{\frac{1}{2}\left(\frac{n!}{(2n+1)!}\right)^2 \abs{z}^{2n+2 - \alpha}, 2\abs{z}^{-\alpha} \right\}\quad \quad (z \in \IC_{+}),
\end{align}
so that $f_{n,\alpha}\in \HT_0^\infty(\Sigma_\psi)$. By the holomorphic functional calculus for sectorial operators,
\begin{align*}
 \left(r_n(-A) - T(1)\right)x = f_{n,\alpha}(A)A^{\alpha}x = \frac{1}{2 \pi \i} \int_{\partial \Sigma_\nu} f_{n,\alpha}(z)R(z,A)A^{\alpha}x \, \d z,
\end{align*}
where $\partial \Sigma_\psi$ is oriented such that $\sigma(A)$ is surrounded counterclockwise. Taking operator norms and estimating the integrand on the right-hand side by means of \eqref{A sectorial} and \eqref{Th1:Eq1} yields
\begin{align*}
  \left\|\left(r_n(-A) - T(1)\right)x\right\| \leq \frac{M_{\nu}}{\pi}\|A^{\alpha}x\| \int_0^\infty \min \left\{\frac{1}{2}\left(\frac{n!}{(2n+1)!}\right)^2 r^{2n+2 - \alpha}, 2r^{-\alpha} \right\} \, \frac{\d r}{r},
\end{align*}
which fits perfectly into the setting of Lemma~\ref{integrand estimate}. Hence,
\begin{align*}
 \left\|\left(r_n(-A) - T(1)\right)x\right\| 
 &\leq \frac{4M_{\nu}}{\alpha\pi} \frac{n+1}{2n+2-\alpha}\left( \frac{n!}{2(2n+1)!} \right)^{\frac{\alpha}{n+1}}\|A^{\alpha}x\| \\
 &\leq \frac{4M_{\nu}}{\alpha\pi} \left( \frac{n!}{(2n+1)!} \right)^{\frac{\alpha}{n+1}}\|A^{\alpha}x\|.
\end{align*}
The rightmost term can be estimated by means of Lemma~\ref{factorial estimate}. This leads to
\begin{align*}
 \left\|\left(r_n(-A) - T(1)\right)x\right\| \leq \frac{4M_{\nu}}{\alpha\pi} (n+1)^{-\alpha}\|A^{\alpha}x\|,
\end{align*}
which is the required estimate for $t=1$.
\end{proof}

\begin{remark}
In the situation of Theorem~\ref{1st theorem} the scaling and squaring methods associated to a fixed subdiagonal Pad\'{e} approximant converge strongly on $X$ and even in $\mathcal{L}(X)$, see \cite[Thm.~4.4]{Larsson-Thomee}. Whether this is true for the method without scaling and squaring as well is left as an open problem. In Section~\ref{Hinfty calculus} we will prove strong convergence on $X$ under an additional assumption on $A$. 
\end{remark}

\subsection{Exponentially $\gamma$-stable semigroups}

In this section we strengthen the convergence result from Theorem~\ref{main result uniformly bounded} for so-called $\gamma$-bounded $C_{0}$-semigroups. The notion of $\gamma$-boundedness, originating in the work of Kalton and Weis \cite{Kalton-Weis}, allows to generalize results which rely on Plancherel's theorem, and therefore on a Hilbert space structure, to general Banach spaces. The usefulness of the concept arises from the fact that many families of operators can be shown to be $\gamma$-bounded. It is closely related to the notion of $R$-boundedness that has proved to be essential in e.g.\@ questions of maximal regularity \cite{Weis}, and in fact the two notions coincide on many common spaces. For more background on $\gamma$-boundedness see \cite{vanNeerven}.

Given a Banach space $X$, a collection $\mathcal{T} \subseteq \mathcal{L}(X)$ is \emph{$\gamma$-bounded} if there is a constant $C \geq 0$ such that
\begin{align*}
 \mathbb{E} \left( \bigg \| \sum_{T \in \mathcal{T}'} \gamma_T Tx_T \bigg \|_X^2 \right)^{\frac{1}{2}} \leq 
 C \mathbb{E} \left( \bigg \| \sum_{T \in \mathcal{T}'} \gamma_T x_T \bigg \|_X^2 \right)^{\frac{1}{2}},
\end{align*}
for all finite subsets $\mathcal{T}' \subseteq \mathcal{T}$, all $x_T \in X$ and all sequences of independent standard Gaussian random variables $\gamma_T$ on some probability space. As usual, $\mathbb{E}$ denotes the expectation value. The smallest such constant $C$ is called the \emph{$\gamma$-bound} of $\mathcal{T}$ and is denoted by $\llbracket\mathcal{T}\rrbracket^\gamma$. 

Gamma-bounded collections are uniformly bounded (take $\mathcal{T}' = \{T\}$ for $T \in \mathcal{T}$) but the converse is not true in general. However, it is on Hilbert spaces, as follows by writing the norm on $X$ via the inner product and applying the independence of the respective random variables.

\begin{lemma}
\label{hilbert space case}
Let $X$ be a Hilbert space. A collection $\mathcal{T} \subseteq \mathcal{L}(X)$ is $\gamma$-bounded if and only if it is uniformly bounded, with $\gamma$-bound equal to the uniform bound.
\end{lemma}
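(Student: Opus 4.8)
The plan is to reduce both sides of the defining inequality for $\gamma$-boundedness to sums of squared norms, exploiting that on a Hilbert space the norm comes from an inner product and that independent standard Gaussians are orthonormal in $\Ell^{2}$ of the underlying probability space.

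First I would record the elementary computation at the heart of the proof: for any finite family $(y_T)_{T \in \mathcal{T}'}$ in $X$ and independent standard Gaussians $(\gamma_T)$, expanding the squared norm via the inner product and using $\mathbb{E}[\gamma_S \gamma_T] = \delta_{ST}$ gives
\[
 \mathbb{E}\bigg\|\sum_{T \in \mathcal{T}'} \gamma_T y_T\bigg\|_X^2 = \sum_{S,T \in \mathcal{T}'} \mathbb{E}[\gamma_S \gamma_T]\,\langle y_S, y_T\rangle = \sum_{T \in \mathcal{T}'} \|y_T\|_X^2 ,
\]
the mixed terms vanishing by independence and zero mean. Applying this identity once with $y_T = T x_T$ and once with $y_T = x_T$, the $\gamma$-boundedness inequality with constant $C$ becomes equivalent to
\[
 \sum_{T \in \mathcal{T}'} \|T x_T\|_X^2 \leq C^2 \sum_{T \in \mathcal{T}'} \|x_T\|_X^2 \qquad \text{for all finite } \mathcal{T}' \subseteq \mathcal{T} \text{ and all } x_T \in X .
\]

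Next I would settle the two implications. If $\mathcal{T}$ is uniformly bounded with $M := \sup_{T \in \mathcal{T}} \|T\|$, then $\|T x_T\|_X^2 \leq M^2 \|x_T\|_X^2$ termwise, and summing yields the displayed inequality with $C = M$; hence $\mathcal{T}$ is $\gamma$-bounded with $\llbracket\mathcal{T}\rrbracket^\gamma \leq M$. Conversely, specializing the inequality to a singleton $\mathcal{T}' = \{T\}$ and arbitrary $x_T = x$ gives $\|Tx\|_X \leq C\|x\|_X$, so $\|T\| \leq C$ for every $T \in \mathcal{T}$ and therefore $M \leq \llbracket\mathcal{T}\rrbracket^\gamma$. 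Combining the two estimates produces $\llbracket\mathcal{T}\rrbracket^\gamma = M$, which is precisely the asserted equality of the $\gamma$-bound and the uniform bound.

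There is no genuine obstacle here: the entire content is the orthogonality relation $\mathbb{E}[\gamma_S \gamma_T] = \delta_{ST}$ and the resulting collapse of the expanded squared norm to a single diagonal sum, which is exactly where the independence of the $\gamma_T$ and the Hilbert space structure are used. The only point deserving mild attention is that the Gaussians are real-valued while $X$ may be complex, so that each $\gamma_S$ emerges unconjugated from the inner product; this causes no difficulty, and everything else is a direct comparison of the two quadratic-sum inequalities.
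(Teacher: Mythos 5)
Your proof is correct and follows exactly the route the paper indicates (the paper gives only a one-line sketch before the lemma: write the norm via the inner product, use independence of the Gaussians, and test on singletons for the converse). Your elaboration — the orthogonality identity $\mathbb{E}[\gamma_S\gamma_T]=\delta_{ST}$ collapsing both sides to diagonal sums of squared norms, and the remark about real Gaussians in a complex Hilbert space — fills in the details faithfully and needs no changes.
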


Now, we specialize to $C_{0}$-semigroups. Note that a $C_{0}$-semigroup $(T(t))_{t\geq 0}$ has type $(M,\omega)$ if and only if $\sup\left\{\|e^{-\omega t}T(t)\|\mid t\geq 0\right\}\leq M$. Together with Lemma~\ref{hilbert space case}, this shows that type and $\gamma$-type defined below coincide for $C_{0}$-semigroups on Hilbert spaces.

\begin{definition}[$\gamma$-bounded semigroups]
\label{gamma-bounded}
A $C_{0}$-semigroup $T = (T(t))_{t \geq 0}$ is said to have \emph{$\gamma$-type} $(M,\omega) \in [1,\infty) \times \IR$ if $\llbracket\e^{-\omega t}T(t) \mid t \geq 0\rrbracket^\gamma \leq M$. The \emph{exponential $\gamma$-bound} of $T$ is defined as
\begin{align*}
 \omega_\gamma(T) := \inf \{\omega \mid \text{$T$ is of $\gamma$-type $(M, \omega)$} \}\in[-\infty,\infty],
\end{align*}
and $T$ is \emph{exponentially $\gamma$-stable} if $\omega_\gamma(T) < 0$.
\end{definition}

The connection to the kind of results we are after is the following proposition, which follows from \cite[Corollary 6.3]{Haase-Rozendaal} by scaling the semigroup.

\begin{proposition}
\label{rozendaal result}
Let $-A$ generate a $C_{0}$-semigroup of $\gamma$-type $(M,\omega) \in [1,\infty) \times (-\infty,0)$ on a Banach space $X$ and let $\beta>0$. Then there is a constant $C = C(M,\omega,\beta)$ such that $f(A)A^{-\beta} \in \mathcal{L}(X)$ and
\begin{align*}
 \|f(A)A^{-\beta}\| \leq C \|f\|_{\HT^{\infty}\!(\IC_{+})}
\end{align*}
for all $f\in\HT^{\infty}\!(\IC_{+})$.
\end{proposition}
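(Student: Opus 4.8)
The plan is to deduce the statement from \cite[Cor.~6.3]{Haase-Rozendaal} by reducing the general exponential $\gamma$-stability rate $\omega<0$ to the normalized rate treated there, the bridge being a \emph{time rescaling} of the semigroup (which is exactly the phrase ``by scaling''). Concretely, I would fix $s:=\abs{\omega}^{-1}>0$ and pass from $A$ to the rescaled operator $B:=sA$, whose negative $-B$ generates the $C_{0}$-semigroup $S(t):=T(st)$. The whole argument is then bookkeeping: transporting the cited bound for $B$ back to $A$.

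First I would check that rescaling changes only the rate and leaves the $\gamma$-bound untouched. Writing $\tau=st$, one has $\{\e^{-s\omega t}S(t)\mid t\geq 0\}=\{\e^{-\omega\tau}T(\tau)\mid\tau\geq 0\}$ as subsets of $\mathcal{L}(X)$, so the two families have the same $\gamma$-bound $M$; since $s\omega=-1$, the semigroup $S$ is of $\gamma$-type $(M,-1)$. Thus $B$ falls into the normalized situation to which \cite[Cor.~6.3]{Haase-Rozendaal} applies, furnishing a constant $C_{0}=C_{0}(M,\beta)$ with
\begin{align*}
 \|h(B)B^{-\beta}\|\leq C_{0}\,\|h\|_{\HT^{\infty}\!(\IC_{+})}\qquad (h\in\HT^{\infty}\!(\IC_{+})).
\end{align*}

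Next I would transport this bound back to $A$ using the behaviour of the holomorphic calculus and of fractional powers under dilation. Given $f\in\HT^{\infty}\!(\IC_{+})$, set $h(z):=f(\abs{\omega}z)$; since $z\mapsto\abs{\omega}z$ maps $\IC_{+}$ bijectively onto itself, $\|h\|_{\HT^{\infty}\!(\IC_{+})}=\|f\|_{\HT^{\infty}\!(\IC_{+})}$. The dilation (composition) rule for the calculus then gives $h(B)=h(sA)=f(\abs{\omega}sA)=f(A)$, while the scaling rule for fractional powers gives $B^{-\beta}=s^{-\beta}A^{-\beta}=\abs{\omega}^{\beta}A^{-\beta}$. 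Combining these identities yields
\begin{align*}
 f(A)A^{-\beta}=\abs{\omega}^{-\beta}\,h(B)B^{-\beta},
\end{align*}
so that $f(A)A^{-\beta}\in\mathcal{L}(X)$ with $\|f(A)A^{-\beta}\|\leq\abs{\omega}^{-\beta}C_{0}(M,\beta)\,\|f\|_{\HT^{\infty}\!(\IC_{+})}$. This proves the claim with $C(M,\omega,\beta):=\abs{\omega}^{-\beta}C_{0}(M,\beta)$.

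The genuinely delicate points are the well-definedness and the compatibility identities rather than any hard estimate. I would first record that exponential $\gamma$-stability forces $\omega_{0}(T)<0$, hence $0\in\rho(A)$, so $A^{-\beta}\in\mathcal{L}(X)$ is a bona fide bounded operator and a legitimate regularizer; this is what makes $f(A)A^{-\beta}$ meaningful in the (regularized) calculus and is where the hypothesis $\omega<0$ is used. The main care then goes into justifying the two identities $h(sA)=f(A)$ and $(sA)^{-\beta}=s^{-\beta}A^{-\beta}$ at the level of the \emph{extended} (regularized) calculus, verifying agreement of domains for the unbounded objects involved; both are standard consequences of the dilation invariance of the functional calculus, but they must be stated and checked for the precise regularized functions at hand rather than merely on $\HT_{0}^{\infty}$.
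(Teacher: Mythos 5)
Your proposal is correct and follows exactly the route the paper indicates: the paper gives no written proof beyond the remark that the proposition ``follows from \cite[Corollary 6.3]{Haase-Rozendaal} by scaling the semigroup,'' and your argument is precisely that scaling reduction (time rescaling preserves the $\gamma$-bound, dilation invariance of the calculus, and the scaling rule for fractional powers) carried out in detail.
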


For exponentially $\gamma$-stable $C_{0}$-semigroups we now derive an extension of Theorem~\ref{main result uniformly bounded}. Note that we also obtain a better rate of convergence on the subspaces covered by Theorem~\ref{main result uniformly bounded}.

\begin{theorem}[Convergence for exponentially $\gamma$-stable semigroups]
\label{main result gamma-stable}
Let $-A$ generate a $C_{0}$-semigroup $T=(T(t))_{t \geq 0}$ on a Banach space $X$ and suppose that $T$ has $\gamma$-type $(M,-\omega)$ for certain $M\geq 1$ and $\omega>0$. Let $\alpha>0$, $a\in (0,\alpha)$ and $x\in \D(A^{\alpha})$ be given. Then there is a constant $C = C(M, \omega, \alpha-a)$ such that
\begin{align*}
 \left \|r_n(-tA)x - T(t)x \right \| \leq C t^{a} (n+1)^{-a} \|A^{\alpha} x\|
\end{align*}
for all $t \geq 0$ and all $n \in \IN$ such that $n > \frac{a}{2} - 1$. 

In particular, for each $\alpha>0$ the sequence $(r_n(-tA))_{n \in \IN}$ converges strongly on $\D(A^{\alpha})$ and locally uniformly in $t\geq 0$ to $T(t)$ with rate $\bigcap_{a < \alpha} \mathcal{O}(n^{-a})$. 
\end{theorem}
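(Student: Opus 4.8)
The plan is to follow the blueprint of Theorem~\ref{main result uniformly bounded}, but to replace the Paley--Wiener/Carlson input by the $\gamma$-boundedness estimate of Proposition~\ref{rozendaal result}, which is exactly what exponential $\gamma$-stability buys us. The case $t=0$ is trivial since $r_n(0)=1=T(0)$, so I fix $t>0$ and $n\in\IN$ with $n>\frac{a}{2}-1$, the latter guaranteeing $a\in(0,2n+2)$ so that Lemma~\ref{multiplicator estimate} applies with the exponent $a$. Set $\beta:=\alpha-a>0$. Because $T$ is exponentially $\gamma$-stable it is in particular exponentially stable, so $0\in\rho(A)$ and the negative fractional powers $A^{-\beta}$, $A^{-\alpha}$ are bounded.

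First I would rewrite the error within the calculus. Writing $y:=A^\alpha x$ and $x=A^{-\alpha}y$, the symbol of $(r_n(-tA)-T(t))A^{-\alpha}$ is
\begin{align*}
\psi(z):=\frac{r_n(-tz)-\e^{-tz}}{z^\alpha}=g(z)\,z^{-\beta},\qquad g(z):=\frac{r_n(-tz)-\e^{-tz}}{z^{a}}=t^{a}f_{n,a}(tz),
\end{align*}
with $f_{n,a}$ as in \eqref{important function}. By the multiplier rule of the calculus this factorizes as $\psi(A)=g(A)A^{-\beta}$, so that $(r_n(-tA)-T(t))x=g(A)A^{-\beta}y$.

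The key step is to bound $g(A)A^{-\beta}$. Lemma~\ref{multiplicator estimate} shows that $f_{n,a}$, and hence $g$, lies in $\HT^{\infty}\!(\IC_+)$, so Proposition~\ref{rozendaal result} applies with the present $\beta$ and produces a constant $C=C(M,\omega,\beta)=C(M,\omega,\alpha-a)$ with $\|g(A)A^{-\beta}\|\le C\|g\|_{\HT^{\infty}\!(\IC_+)}$. It remains to estimate the sup-norm: a rescaling $w=tz$ gives $\|g\|_{\HT^{\infty}\!(\IC_+)}=t^{a}\sup_{w\in\overline{\IC_+}}|f_{n,a}(w)|$, and then Lemma~\ref{multiplicator estimate} followed by Lemma~\ref{factorial estimate} yields
\begin{align*}
\|g\|_{\HT^{\infty}\!(\IC_+)}\le 2\,t^{a}\left(\frac{n!}{(2n+1)!}\right)^{\frac{a}{n+1}}\le 2\,t^{a}(n+1)^{-a}.
\end{align*}
Combining the last displays gives the claimed estimate with constant $2C(M,\omega,\alpha-a)$.

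I expect the main obstacle to be purely the functional-calculus bookkeeping: justifying the product rule $\psi(A)=g(A)A^{-\beta}$ and the identity $(r_n(-tA)-T(t))A^{-\alpha}=\psi(A)$ as an equality of bounded operators, so that $A^\alpha x$ may be passed freely through these manipulations. This is harmless here because exponential stability renders $A$ invertible and all negative fractional powers bounded, but it is the one place where care is needed. Finally, for the ``in particular'' claim, for fixed $\alpha$ and any $a<\alpha$ the factor $t^{a}$ is bounded on compact $t$-intervals, so the estimate yields local uniform convergence with rate $\mathcal{O}(n^{-a})$; intersecting over all $a<\alpha$ gives the rate $\bigcap_{a<\alpha}\mathcal{O}(n^{-a})$.
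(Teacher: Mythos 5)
Your proposal is correct and follows essentially the same route as the paper: both reduce the error to $f(A)A^{a-\alpha}$ with $f(z)=t^{a}f_{n,a}(tz)$, apply Proposition~\ref{rozendaal result} with $\beta=\alpha-a$, and bound $\|f\|_{\HT^{\infty}\!(\IC_{+})}$ by $2t^{a}(n+1)^{-a}$ via Lemmas~\ref{multiplicator estimate} and~\ref{factorial estimate}. Your extra remarks on the factorization bookkeeping and the invertibility of $A$ are sound but only make explicit what the paper leaves implicit.
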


\begin{proof}
The proof is very similar to that of Theorem~\ref{main result uniformly bounded}, appealing to Proposition~\ref{rozendaal result} instead of Lemma~\ref{paley-wiener lemma}. Fix $t$ and $n$ as required and let $f$ be as in \eqref{function definition} with $\alpha$ replaced by $a$. Proposition~\ref{rozendaal result} yields
\begin{align*}
 \|(r_n(-tA) - T(t))A^{-\alpha}\| = \|f(A) A^{a-\alpha}\| \leq C(M,\omega,\alpha-a) \|f\|_{\HT^{\infty}\!(\IC_{+})}.
\end{align*}
Lemma~\ref{multiplicator estimate} and Lemma~\ref{factorial estimate} imply
\begin{align*}
\|f\|_{\HT^{\infty}\!(\IC_{+})}
 = \sup_{z \in \IC_+} t^{a}\left|\frac{r_n(-tz) - \e^{-tz}}{(tz)^a}\right|
 \leq 2 t^{a} \left( \frac{n!}{(2n+1)!}\right)^{\frac{a}{n+1}}
 \leq 2 t^{a} (n+1)^{-a}.
\end{align*}
Combining our previous two estimates yields
\begin{align*}
 \|(r_n(-tA) - T(t))x\| 
 = \|(r_n(-tA) - T(t))A^{-\alpha}A^{\alpha}x\|
 \leq  C t^{a} (n+1)^{-a} \|A^{\alpha}x\|
\end{align*}
for some constant $C=C(M, \omega, \alpha- a)$.
\end{proof}

\begin{remark}
\label{Hilbert space result}
By Lemma~\ref{hilbert space case}, the conclusion of Theorem~\ref{main result gamma-stable} specifically holds true if $T$ is an exponentially stable $C_{0}$-semigroup on a Hilbert space.
\end{remark}

\begin{remark}
\label{group case}
Theorem~\ref{main result gamma-stable} yields a convergence rate $\mathcal{O}(n^{-a})$ on $\D(A^\alpha)$ for arbitrary $a<\alpha$ but it does not provide a statement concerning the limit case $a=\alpha$. Suppose, in addition to the hypotheses from Theorem~\ref{main result gamma-stable}, that the operators $T(t)$, $t \geq 0$, are invertible and the collection $\left\{e^{\omega' t}T(t)^{-1}\mid t\geq 0\right\}$ is $\gamma$-bounded for some $\omega'\in\IR$, i.e.\@ that $T$ extends to an exponentially $\gamma$-bounded \emph{group}. In this case the $\gamma$-version of the Boyadzhiev-de Laubenfels Theorem due to Kalton and Weis \cite[Theorem 6.5]{Haase3} implies that $A$ has a bounded $\HT^{\infty}$-calculus on $\IC_{+}$. Hence, Theorem~\ref{main result hinfty} from the next section implies convergence of order $\mathcal{O}(n^{-\alpha})$ on $\D(A^{\alpha})$ and even strong convergence on the whole space $X$. Due to Lemma~\ref{hilbert space case} this applies in particular if $T$ is an exponentially stable semigroup on a Hilbert space for which all operators $T(t)$, $t \geq 0$, are invertible.
\end{remark}

\subsection{Semigroup generators with a bounded $\HT^{\infty}$-calculus}
\label{Hinfty calculus}
Let $-A$ be the generator of a uniformly bounded semigroup $T=(T(t))_{t \geq 0}$ on a Banach space $X$. In all of our results so far, the convergence
\begin{align*}
 r_n(-tA)x \stackrel{n \to \infty}{\longrightarrow} T(t)x
\end{align*}
holds for $x$ belonging to some proper subspace of $X$, if $A$ is unbounded. In this section we show that this convergence can be extended to all $x \in X$ if $A$ has a \emph{bounded $\HT^{\infty}$-calculus} on $\IC_+$, i.e.\@ there is a constant $C\geq 0$ such that
\begin{align}
\label{bounded functional calculus}
\|r(A)\|_{\mathcal{L}(X)}\leq C\|r\|_{\HT^{\infty}\!(\IC_{+})}
\end{align}
holds for all rational functions $r\in\HT^{\infty}\!(\IC_{+})$. The smallest such constant is the \emph{$\HT^{\infty}$-bound} of $A$. For such $A$ the following result holds.

\begin{theorem}
\label{main result hinfty}
Let $(T(t))_{t \geq 0}$ be a uniformly bounded $C_{0}$-semigroup on a Banach space $X$ with generator $-A$ and suppose that $A$ admits a bounded $\HT^{\infty}$-calculus on $\IC_+$ with $\HT^{\infty}$-bound $C$. Let $\alpha >0$ and $x\in \D(A^\alpha)$ be given. Then
\begin{align}
\label{estimate hinfty}
\left \|r_n(-tA)x - T(t)x \right \| \leq  2 C t^{\alpha} (n+1)^{-\alpha} \|A^\alpha x\|
\end{align}
for all $t \geq 0$ and all $n \in \IN$ such that $n > \frac{\alpha}{2} - 1$.

In particular, for each $\alpha > 0$ the sequence $(r_n(-tA))_{n \in \IN}$ converges to $T(t)$ strongly on $\D(A^\alpha)$ with rate $\mathcal{O}(n^{-\alpha})$ and locally uniformly in $t \geq 0$.

Moreover, $(r_{n}(-tA))_{n\in\IN}$ converges strongly to $T(t)$ on $X$, locally uniformly in $t\geq 0$.
\end{theorem}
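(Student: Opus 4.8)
The plan is to prove the quantitative estimate \eqref{estimate hinfty} first, and then deduce the density statement on all of $X$ by a standard approximation argument. For the main estimate I would follow the template of Theorem \ref{main result gamma-stable}, which already treats the case where one trades a power of $A$ against a bounded $\HT^\infty$ multiplier. Fix $t>0$ and $n$ as required (the case $t=0$ being trivial), and recall the function $f$ from \eqref{function definition}, namely $f(z)=t^\alpha f_{n,\alpha}(tz)=\frac{r_n(-tz)-\e^{-tz}}{z^\alpha}$. The decisive point is that $r_n(-t\,\cdot)$ and $\e^{-t\,\cdot}$ are rational, respectively, holomorphic and bounded on $\IC_+$, so $f$ is a bounded holomorphic function on $\IC_+$ that is regularized by $z\mapsto z^{-\alpha}$; consequently $f(A)=(r_n(-tA)-T(t))A^{-\alpha}$ in the calculus, and $f$ is (approximable by) rational functions to which the boundedness estimate \eqref{bounded functional calculus} applies.

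The first main step is therefore to estimate $\|f\|_{\HT^\infty(\IC_+)}$. This is already done implicitly in the proof of Theorem \ref{main result gamma-stable}: Lemma \ref{multiplicator estimate} combined with Lemma \ref{factorial estimate} gives
\begin{align*}
 \|f\|_{\HT^{\infty}\!(\IC_{+})}
 = \sup_{z\in\IC_+} t^{a}\left|\frac{r_n(-tz)-\e^{-tz}}{(tz)^\alpha}\right|
 \leq 2\,t^{\alpha}\left(\frac{n!}{(2n+1)!}\right)^{\frac{\alpha}{n+1}}
 \leq 2\,t^{\alpha}(n+1)^{-\alpha},
\end{align*}
valid for $\alpha\in(0,2n+2)$, i.e.\ precisely for $n>\frac{\alpha}{2}-1$. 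The second main step is to feed this into \eqref{bounded functional calculus}. Writing $(r_n(-tA)-T(t))x=f(A)A^\alpha x$ for $x\in\D(A^\alpha)$ and applying the $\HT^\infty$-bound to $f$ (after regularization, so that $f(A)$ is realized as a bounded operator with norm $\le C\|f\|_{\HT^\infty(\IC_+)}$) yields
\begin{align*}
 \|r_n(-tA)x-T(t)x\| = \|f(A)A^\alpha x\| \leq C\,\|f\|_{\HT^\infty(\IC_+)}\,\|A^\alpha x\| \leq 2C\,t^\alpha(n+1)^{-\alpha}\|A^\alpha x\|,
\end{align*}
which is \eqref{estimate hinfty}. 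Local uniform convergence in $t$ on $\D(A^\alpha)$ is immediate from the factor $t^\alpha$.

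The final step is the extension to all of $X$. Here I would exploit that $\D(A^\alpha)$ is dense in $X$ (for $\alpha=1$, say, density of $\D(A)$ is automatic for a $C_0$-semigroup generator) together with a uniform bound on the operators $r_n(-tA)-T(t)$ locally in $t$. The family $\{r_n(-t\,\cdot)\mid n\in\IN,\ t\in[0,\tau]\}$ lies in the unit ball of $\HT^\infty(\overline{\IC_-})$ by $\mathcal A$-stability, so \eqref{bounded functional calculus} gives $\sup_n\sup_{t\in[0,\tau]}\|r_n(-tA)\|\le C$, and the semigroup is uniformly bounded; hence $\sup_n\sup_{t\in[0,\tau]}\|r_n(-tA)-T(t)\|<\infty$. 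A uniform boundedness plus convergence on a dense set argument (an $\eps/3$ estimate: approximate $x\in X$ by $y\in\D(A^\alpha)$, apply the rate on $y$, and control the difference uniformly in $n$ and $t$) then upgrades strong convergence from $\D(A^\alpha)$ to all of $X$, locally uniformly in $t$. The part requiring the most care is the passage to rational functions needed to legitimately invoke \eqref{bounded functional calculus} for $f$ itself rather than for each $r_n(-t\,\cdot)$ separately; I expect this to be routine given the structure of $f$, since $r_n(-t\,\cdot)$ is already rational and $\e^{-t\,\cdot}$ can be handled directly through the Hille-Phillips calculus, so the genuinely new content is entirely contained in the $\HT^\infty$-norm estimate of the first step.
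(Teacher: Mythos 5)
Your proposal is correct and follows essentially the same route as the paper: the estimate comes from writing $(r_n(-tA)-T(t))x=f(A)A^\alpha x$, bounding $\|f\|_{\HT^\infty(\IC_+)}$ via Lemmas \ref{multiplicator estimate} and \ref{factorial estimate}, and invoking the $\HT^\infty$-bound, while the extension to $X$ uses uniform boundedness plus density of $\D(A)$. The one step you flag as needing care --- applying \eqref{bounded functional calculus} to $f$ itself --- is handled in the paper exactly as you anticipate, by extending the calculus to $\HT^\infty(\IC_+)\cap\Ce(\overline{\IC_+})$ via uniform limits of rational functions (citing \cite{Haase}) and regularization.
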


\begin{proof}
First note that, under the present assumptions, one can extend the Hille-Phillips calculus for $A$ to all functions $f\in \HT^{\infty}\!(\IC_{+})\cap \Ce(\overline{\IC_{+}})$ by taking uniform limits of rational functions \cite[Proposition F.3]{Haase} and then regularizing. This yields a proper functional calculus in the terminology of \cite[Sect.~1.2]{Haase} and \eqref{bounded functional calculus} extends to all $f\in \HT^{\infty}\!(\IC_{+})\cap \Ce(\overline{\IC_{+}})$ for which $\lim_{z\rightarrow \infty}f(z)$ exists. 

Now, fix $t$ and $n$ as required and let $f$ be as in \eqref{function definition}. Then $f \in \HT^{\infty}\!(\IC_{+})\cap \Ce(\overline{\IC_{+}})$ by Lemma \ref{error estimate} and $\lim_{z\rightarrow \infty}f(z)=0$ by $\mathcal{A}$-stability of the $r_{n}$. Hence,
\begin{align*}
 \|r_n(-tA)x - T(t)x\| = \|f(A) A^{\alpha}x\| \leq C \|f\|_{\HT^{\infty}\!(\IC_{+})} \|A^\alpha x\|
\end{align*}
and \eqref{estimate hinfty} follows by estimating $\|f\|_{\HT^{\infty}\!(\IC_{+})}$ via Lemma~\ref{multiplicator estimate} and Lemma~\ref{factorial estimate}. 

Finally, we prove that $(r_{n}(-tA))_{n\in\IN}$ converges strongly to $T(t)$ on $X$, locally uniformly in $t\geq 0$. To this end note that, for $K$ a bounded subset of $\IR_+$, the family 
\begin{align*}
 \{r_n(-tA)-T(t) \mid n \in \IN, t \in K\} \subseteq \mathcal{L}(X)
\end{align*}
is bounded due to \eqref{bounded functional calculus}. Since $r_n(-tA)$ converges to $T(t)$ strongly on $\D(A)$ and uniformly in $t \in K$ as $n \to \infty$, the denseness of $\D(A)$ in $X$ implies that this convergence extends to all of $X$.
\end{proof}

\begin{remark}
\label{contraction semigroups}
The requirements for Theorem~\ref{main result hinfty} are e.g.\@ satisfied if $-A$ generates a $C_{0}$-semigroup on $X$ that is (similar to) a contraction semigroup on a Hilbert space, cf.~Theorem 7.1.7 and Remark 7.1.9 in \cite{Haase}. In that case, one can choose $C=1$ in \eqref{estimate hinfty}. Any bounded $C_{0}$-group on a Hilbert space is similar to a contraction group, cf.~\cite{Sz.-Nagy}.

Theorem~\ref{main result hinfty} also applies to sectorial operators of angle $\varphi<\frac{\pi}{2}$ on a Hilbert space that satisfy certain square function estimates, cf.~\cite{Cowling-Doust-McIntosh-Yagi}.
\end{remark}

\subsection{Extension to intermediate spaces}
\label{intermediate spaces}

In this section we outline how to extend the results from the previous sections to classes of intermediate spaces. Throughout, let $-A$ be the generator of a uniformly bounded $C_{0}$-semigroup $T = (T(t))_{t \geq 0}$ on a Banach space $X$. For $k \in \IN$ the $k$-th \emph{Favard space} is
\begin{align*}
 \mathrm{F}_{k}:=\bigg\{x\in \D(A^{k-1})\mid L(A^{k-1}x):=\limsup_{t\downarrow 0}\frac{1}{t}\|T(t)A^{k-1}x-A^{k-1}x\|<\infty\bigg\}.
\end{align*}
Then $\D(A^k) \subseteq \mathrm{F}_k$ and for non-reflexive Banach spaces this inclusion can be strict, see e.g.\@ Section~\ref{Laplace transform} below. It is therefore remarkable that all convergence results from the previous sections immediately extend from $\D(A^k)$ to $\mathrm{F}_k$ upon replacing $\|A^k x\|$ by $L(A^{k-1}x)$ on the respective right-hand sides. This is due to the subsequent lemma \cite[Prop.\@ 1]{Kovacs}, the short proof of which is included for the reader's convenience.

\begin{lemma}
\label{Kovacs-lemma}
Let $S \in \mathcal{L}(X)$ and suppose there exist $k \in \IN$ and $C\geq 0$ such that $\|Sx\| \leq C\|A^k x\|$ holds for all $x \in \D(A^k)$. Then $\|Sx\| \leq C L(A^{k-1}x)$ holds for all $x \in \mathrm{F}_k$.
\end{lemma}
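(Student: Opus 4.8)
The plan is to approximate an arbitrary $x \in \mathrm{F}_k$ from inside $\D(A^k)$ by mollified elements whose $A^k$-norm is \emph{exactly} the difference quotient defining $L(A^{k-1}x)$, and then to pass to the limit using the boundedness of $S$ together with the hypothesis. The natural mollifier is the averaged orbit
\[
 x_h := \frac{1}{h}\int_0^h T(s)x \, \d s \qquad (h>0).
\]

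First I would collect the standard facts about the operators $V_h := \int_0^h T(s)\,\d s \in \mathcal{L}(X)$. For every $y \in X$ one has $V_h y \in \D(A)$ with $A V_h y = y - T(h)y$, since $-A$ is the generator; moreover $V_h$ leaves each $\D(A^j)$ invariant and commutes with $A^j$ there. As $x \in \mathrm{F}_k \subseteq \D(A^{k-1})$, these two facts combine to give $A^{k-1} x_h = \tfrac{1}{h} V_h(A^{k-1}x) \in \D(A)$, hence $x_h \in \D(A^k)$ with
\[
 A^k x_h = \frac{1}{h}\bigl(A^{k-1}x - T(h)A^{k-1}x\bigr),
 \qquad\text{so}\qquad
 \|A^k x_h\| = \frac{1}{h}\,\|T(h)A^{k-1}x - A^{k-1}x\|.
\]

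Next I would use strong continuity of $T$ to note that $x_h \to x$ as $h \downarrow 0$, whence $S x_h \to S x$ by boundedness of $S$ and $\|Sx\| = \lim_{h\downarrow 0}\|S x_h\|$. Applying the hypothesis to $x_h \in \D(A^k)$ gives
\[
 \|S x_h\| \leq C\,\|A^k x_h\| = C\,\frac{1}{h}\,\|T(h)A^{k-1}x - A^{k-1}x\|,
\]
and taking $\limsup_{h\downarrow 0}$ on both sides yields $\|Sx\| \leq C\,L(A^{k-1}x)$ by the very definition of $L$.

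The only point that requires genuine care --- and hence the main obstacle --- is the identification $x_h \in \D(A^k)$ with the explicit formula for $A^k x_h$: it relies on the fundamental relation $A V_h = \Id - T(h)$ on $X$ and on $V_h$ commuting with the closed operators $A^j$, so that the single regularizing integration built into $V_h$ promotes $A^{k-1}x \in X$ into $\D(A)$. Everything else is a routine convergence argument. As a sanity check, for $x \in \D(A^k)$ one has $L(A^{k-1}x) = \|A^k x\|$ (because $\tfrac1h(T(h)y - y) \to -Ay$ for $y \in \D(A)$), so the asserted bound genuinely refines the hypothesis on $\mathrm{F}_k \supseteq \D(A^k)$.
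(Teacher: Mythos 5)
Your proof is correct and follows essentially the same route as the paper: approximating $x$ by the averaged orbits $x_h=\frac{1}{h}\int_0^h T(s)x\,\d s\in\D(A^k)$, computing $\|A^kx_h\|=\frac1h\|T(h)A^{k-1}x-A^{k-1}x\|$, and passing to the limit superior as $h\downarrow 0$. You merely spell out in more detail the justification that $x_h\in\D(A^k)$, which the paper takes for granted.
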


\begin{proof}
Approximate $x\in \mathrm{F}_{k}$ by elements 
\begin{align*}
x_{t}:=\frac{1}{t}\int_{0}^{t}T(s)x\,\d s\in \D(A^{k}) \quad \quad (t>0)
\end{align*}
and note that
\begin{align*}
 \|Sx_t\| \leq C \|A^{k}x_{t} \| = C\bigg \|\frac{1}{t}(T(t)A^{k-1}x-A^{k-1}x)\bigg \| \quad \quad (t>0).
\end{align*}
Now, the conclusion follows by passing to the limit superior as $t \downarrow 0$.
\end{proof}

Using the inclusions from Proposition~3.1.1, Corollary~6.6.3 and Proposition~B.3.5 in \cite{Haase}, all our convergence results carry over to the domains of certain complex fractional powers, as well as to real and complex interpolation spaces. This includes Favard spaces of non-integer order\cite[Section 3.3]{Kovacs-Diss}.

\section{Application to the inversion of the Laplace transform}
\label{Laplace transform}

\noindent Following an idea from \cite{NOS}, we show how the results from Section \ref{Proofs} can be used to obtain inversion formulas for the vector-valued Laplace transform, with precise error-estimates. 

Throughout, let $X$ be a Banach space and $\Ce_{\textrm{ub}}(\IR_+;X)$ the space of bounded uniformly continuous functions from $\IR_+$ to $X$ equipped with the supremum norm. For $k \in \IN$ denote by $\Ce_{\textrm{ub}}^k(\IR_+;X) \subseteq \Ce_{\textrm{ub}}(\IR_+;X)$ the subspace of $k$-times differentiable functions whose derivatives up to order $k$ belong to $\Ce_{\textrm{ub}}(\IR_+;X)$ and by $\Ce_{\textrm{ub}}^{k,1}(\IR_{+};X)$ the space of all functions $f\in \Ce_{\textrm{ub}}^{k}(\IR_+;X)$ for which $f^{(k)}$ is globally Lipschitz continuous. For $f: \IR_+ \to X$ define
\begin{align*}
L(f):=\limsup_{t\downarrow 0}\frac{1}{t}\|f(t+\cdot)-f\|_{\infty} \in [0,\infty].
\end{align*}

On $\Ce_{\textrm{ub}}(\IR_+;X)$ we consider the derivation operator $Af:= -f'$ with maximal domain $\Ce_{\textrm{ub}}^1(\IR_+;X)$. Then $-A$ generates the strongly continuous left translation semigroup $T_l= (T_l(t))_{t \geq 0}$, where $(T_l(t)f)(\cdot) = f(t + \, \cdot)$. By definition, the associated Favard spaces are given by
\begin{align*}
 \mathrm{F}_k = \{f \in \Ce_{\textrm{ub}}^{k-1}(\IR_+;X) \mid L(f^{(k-1)}) < \infty \} = \Ce_{\textrm{ub}}^{k-1,1}(\IR_{+};X) \quad \quad (k\in \IN).
\end{align*}
Moreover, $T_l$ is of type $(1,0)$ and if $f\in \Ce_{\textrm{ub}}(\IR_+;X)$ and $\lambda \in \IC_+$, then
\begin{align*}
 ((\lambda + A)^{-1}f)(0) = \int_0^\infty \e^{-\lambda t} (T_l(t)f)(0) \, \d t = \int_0^\infty \e^{-\lambda t} f(t) \, \d t = \hat{f}(\lambda),
\end{align*}
with $\hat{f}: \IC_+ \to X$ the (vector-valued) Laplace transform of $f$. This identity enables us to convert our approximation result for uniformly bounded semigroups into an inversion formula for the Laplace transform. To this end, let
\begin{align*}
 r_n(z) = \frac{b_{n,1}}{\lambda_{n,1}-z} + \dots + \frac{b_{n,n+1}}{\lambda_{n,n+1}-z} \quad \quad (z \in \IC \setminus \{\lambda_{n,1},\dots,\lambda_{n,n+1} \})
\end{align*}
be the partial fraction decomposition of the $n$-th subdiagonal Pad\'{e} approximation. Applying Theorem~\ref{main result uniformly bounded} to $T_{l}$ and evaluating at zero yields the subsequent result for all $f\in \Ce_{\textrm{ub}}^k(\IR_+;X)$. Lemma~\ref{Kovacs-lemma} then extends it to all $f\in \Ce_{\textrm{ub}}^{k-1,1}(\IR_+;X)$.

\begin{corollary}
\label{rational laplace inversion}
Let $X$ be a Banach space and $f \in \Ce_{\textrm{ub}}^{k-1,1}(\IR_+;X)$ for some $k \in \IN$. Then for all $t>0$ and all $n \in \IN$ such that $n \geq k - \frac{1}{2}$ the estimate
\begin{align*}
 \left\|\sum_{j=1}^{n+1} \frac{b_{n,j}}{t}\hat{f}\left(\frac{\lambda_{n,j}}{t} \right) - f(t) \right \|_X
 \leq C(k) t^{k} (n+1)^{-k + \frac{1}{2}} L(f^{(k-1)})
\end{align*}
holds true with $C(k)$ given by \eqref{C(alpha)}. In particular, $\sum_{j=1}^{n+1} \frac{b_{n,j}}{t}\hat{f}\left(\frac{\lambda_{n,j}}{t} \right)$ converges to $f(t)$ with rate $\mathcal{O}(n^{-k + \frac{1}{2}})$, locally uniformly in $t$.
\end{corollary}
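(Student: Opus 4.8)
The plan is to derive Corollary~\ref{rational laplace inversion} as a direct specialization of Theorem~\ref{main result uniformly bounded} applied to the left translation semigroup $T_l$ on the concrete space $X_{\mathrm{tr}} := \Ce_{\textrm{ub}}(\IR_+;X)$, evaluated at the point $0$. First I would fix $f \in \Ce_{\textrm{ub}}^{k-1,1}(\IR_+;X) = \mathrm{F}_k$ and record the two structural facts already established in the running text: that $T_l$ is of type $(1,0)$ with generator $-A$ given by $Af = -f'$, and that for $g \in X_{\mathrm{tr}}$ and $\lambda \in \IC_+$ one has the pointwise identity $((\lambda + A)^{-1}g)(0) = \hat{g}(\lambda)$. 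Since $T_l$ is uniformly bounded with $M=1$, Theorem~\ref{main result uniformly bounded} applies with $\alpha = k$, so for every $g \in \D(A^k) = \Ce_{\textrm{ub}}^k(\IR_+;X)$ and every $n \geq k - \frac{1}{2}$,
\begin{align*}
 \|r_n(-tA)g - T_l(t)g\|_{X_{\mathrm{tr}}} \leq C(k)\, t^{k} (n+1)^{-k+\frac{1}{2}} \|A^k g\|_{X_{\mathrm{tr}}}.
\end{align*}

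Next I would translate the operator $r_n(-tA)$ into the claimed sum of scaled Laplace transforms. Using the partial fraction decomposition $r_n(z) = \sum_{j=1}^{n+1} \frac{b_{n,j}}{\lambda_{n,j} - z}$ and the Hille-Phillips calculus as in Section~\ref{functional calculus}, one has
\begin{align*}
 r_n(-tA) = \sum_{j=1}^{n+1} \frac{b_{n,j}}{t}\Big(\frac{\lambda_{n,j}}{t} + A\Big)^{-1}.
\end{align*}
Evaluating the function $r_n(-tA)g$ at the point $0 \in \IR_+$ and invoking the resolvent-Laplace identity for each summand with $\lambda = \lambda_{n,j}/t \in \IC_+$ (valid since the poles $\lambda_{n,j}$ lie in $\IC_+$, as recorded in Section~\ref{Pade approximations}) yields
\begin{align*}
 (r_n(-tA)g)(0) = \sum_{j=1}^{n+1} \frac{b_{n,j}}{t}\,\hat{g}\Big(\frac{\lambda_{n,j}}{t}\Big).
\end{align*}
Meanwhile $(T_l(t)g)(0) = g(t)$ by definition of the left translation semigroup. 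Since evaluation at $0$ is a contraction from $X_{\mathrm{tr}}$ to $X$, the displayed estimate above immediately gives the desired inequality for all $g \in \Ce_{\textrm{ub}}^k(\IR_+;X)$, with $\|A^k g\|_{X_{\mathrm{tr}}} = \|g^{(k)}\|_\infty$.

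Finally I would remove the extra regularity by passing from $\D(A^k)$ to the Favard space $\mathrm{F}_k = \Ce_{\textrm{ub}}^{k-1,1}(\IR_+;X)$. The operator $S := r_n(-tA) - T_l(t)$ is bounded on $X_{\mathrm{tr}}$, and composing it with evaluation at $0$ gives a bounded operator into $X$ satisfying the hypothesis of Lemma~\ref{Kovacs-lemma} with constant $C = C(k)\, t^{k}(n+1)^{-k+\frac{1}{2}}$. Lemma~\ref{Kovacs-lemma} then upgrades the bound by $\|A^k g\|$ to the bound by $L(A^{k-1}g)$, and under the Favard space identification one checks that $L(A^{k-1}g) = L(g^{(k-1)})$ (up to the sign absorbed into the $\limsup$), giving the stated estimate with $L(f^{(k-1)})$ on the right. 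The local uniform convergence in $t$ is inherited directly from the corresponding statement in Theorem~\ref{main result uniformly bounded}. I do not anticipate a serious obstacle here; the only point requiring minor care is matching the abstract Favard-space functional $L(A^{k-1}\cdot)$ with the concrete Lipschitz seminorm $L(f^{(k-1)})$, which follows from the explicit computation of $\mathrm{F}_k$ for the translation semigroup already carried out in the text preceding the corollary.
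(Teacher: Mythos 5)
Your proposal is correct and follows essentially the same route as the paper: apply Theorem~\ref{main result uniformly bounded} with $\alpha=k$ and $M=1$ to the left translation semigroup, use the partial fraction decomposition and the resolvent--Laplace identity to evaluate at $0$, and then invoke Lemma~\ref{Kovacs-lemma} to pass from $\D(A^k)=\Ce_{\textrm{ub}}^k(\IR_+;X)$ to the Favard space $\mathrm{F}_k=\Ce_{\textrm{ub}}^{k-1,1}(\IR_+;X)$. This is exactly the argument sketched in the text preceding the corollary.
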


\begin{remark}
The Laplace inversion formula from Corollary~\ref{rational laplace inversion} actually converges for any $f\in \Ce_{\textrm{ub}}(\IR_{+};X)$ that is $\alpha$-H\"{o}lder continuous for some $\alpha \in (\frac{1}{2},1)$ with rate depending on $\alpha$. This follows again from Theorem~\ref{main result uniformly bounded}, using that such an $f$ is contained in the real interpolation space $(\Ce_{\textrm{ub}}(\IR_+;X), D(A))_{\alpha,\infty}$, which continuously embeds into $\D(A^a)$ for any $a<\alpha$, see Propositions~6.6.3 and B.2.6 in \cite{Haase}.
\end{remark}

\begin{remark}
We emphasize that Corollary~\ref{rational laplace inversion} provides a Laplace inversion formula that does not require any knowledge of derivatives of $\hat{f}$ and only uses finite sums as approximants, compare with e.g.\@ \cite{Jara}, \cite{ABHN}. Moreover, $C(k)$ can be computed explicitly, see also Remark~\ref{Special values of C(alpha)}.
\end{remark}

\paragraph{\textbf{Acknowledgments}}
The authors want to thank Hans Zwart for first coming up with the idea to use the results in \cite{Haase-Rozendaal} to prove something along the lines of Theorem~\ref{main result gamma-stable}, and Markus Haase for providing numerous useful comments and suggestions. The first author wants to thank Frank Neubrander for his kind hospitality and the inspiring discussions during his visit at Louisiana State University. The first author is supported by ``Studienstiftung des deutschen Volkes". The second author is supported by NWO-grant 613.000.908 ``Applications of Transference Principles".

\begin{bibdiv}
\begin{biblist}

\bib{ABHN}{book}{
      author={\sc{W. Arendt}},
      author={\sc{C.J.K. Batty}},
      author={\sc{M. Hieber}},
      author={\sc{F. Neubrander}},
       title={Vector-valued {L}aplace {T}ransforms and {C}auchy {P}roblems},
      series={Monographs in Mathematics},
   publisher={Birkh\"auser/Springer Basel AG, Basel},
        date={2011},
      volume={96},
        ISBN={978-3-0348-0086-0},
}

\bib{Beckenbach-Bellman}{book}{
      author={\sc{E.F. Beckenbach}},
      author={\sc{R. Bellman}},
       title={Inequalities},
      series={Ergebnisse der Mathematik und ihrer Grenz-gebiete},
   publisher={Springer-Verlag},
     address={Berlin},
        date={1961},
      volume={30},
}

\bib{Brenner-Thomee}{article}{
      author={\sc{P. Brenner}},
      author={\sc{V. Thom{\'e}e}},
       title={On rational approximations of semigroups},
        date={1979},
     journal={SIAM J. Numer. Anal.},
      volume={16},
      number={4},
       pages={683\ndash 694},
}

\bib{Cowling-Doust-McIntosh-Yagi}{article}{
      author={\sc{M. Cowling}},
      author={\sc{I. Doust}},
      author={\sc{A. McIntosh}},
      author={\sc{A. Yagi}},
       title={Banach space operators with a bounded {$H^\infty$} functional
  calculus},
        date={1996},
        ISSN={0263-6115},
     journal={J. Austral. Math. Soc. Ser. A},
      volume={60},
      number={1},
       pages={51\ndash 89},
}

\bib{Ehle}{article}{
      author={\sc{B.L. Ehle}},
       title={{$A$}-stable methods and {P}ad\'e approximations to the
  exponential},
        date={1973},
        ISSN={0036-1410},
     journal={SIAM J. Math. Anal.},
      volume={4},
       pages={671\ndash 680},
}

\bib{Haase}{book}{
      author={\sc{M. Haase}},
       title={The {F}unctional {C}alculus for {S}ectorial {O}perators},
      series={Operator Theory: Advances and Applications},
   publisher={Birkh{\"a}user Verlag},
     address={Basel},
        date={2006},
      volume={169},
}

\bib{Haase3}{article}{
      author={\sc{M. Haase}},
       title={Transference principles for semigroups and a theorem of
  {P}eller},
        date={2011},
     journal={J. Funct. Anal.},
      volume={261},
      number={10},
       pages={2959\ndash 2998},
}

\bib{Haase-Rozendaal}{article}{
      author={\sc{M. Haase}},
      author={\sc{J. Rozendaal}},
       title={Functional calculus for semigroup generators via transference},
      eprint={http://arxiv.org/abs/1301.4934}
}

\bib{hairer2004solving}{book}{
      author={\sc{E. Hairer}},
      author={\sc{G. Wanner}},
       title={Solving {O}rdinary {D}ifferential {E}quations {II}: {S}tiff and
  {D}ifferential-{A}lgebraic {P}roblems},
      series={Springer Series in Computational Mathematics},
   publisher={Springer},
     address={Berlin, {H}eidelberg},
        date={2004},
      volume={14},
}

\bib{Hille-Phillips}{book}{
      author={\sc{E. Hille}},
      author={\sc{R.S. Phillips}},
       title={Functional {A}nalysis and {S}emigroups},
      series={American Mathematical Society Colloquium Publications},
   publisher={American Mathematical Society},
     address={Providence, R. I.},
        date={1957},
      volume={31},
}

\bib{Jara}{article}{
      author={\sc{P. Jara}},
      author={\sc{F. Neubrander}},
      author={\sc{K. {\"O}zer}},
       title={Rational inversion of the {L}aplace transform},
        date={2012},
     journal={J. Evol. Equ.},
      volume={12},
      number={2},
       pages={435\ndash 457},
}

\bib{Kalton-Weis}{unpublished}{
    	author={\sc{N. Kalton}},
    	author={\sc{L. Weis}},
     	 title={The $\textrm{H}^{\infty}$-functional calculus and square function estimates},
	  note={Unpublished manuscript},
          year={2004},
}

\bib{Kovacs}{article}{
      author={\sc{M. Kov\'{a}cs}},
       title={A remark on the norm of integer order Favard spaces},
        date={2005},
     journal={Semigroup Forum},
      volume={71},
      number={3},
       pages={462\ndash 470},
}

\bib{Kovacs-Diss}{thesis}{
      author={\sc{M. Kov\'{a}cs}},
       title={On {Q}ualitative {P}roperties and {C}onvergence of
  {T}ime-discretization {M}ethods for {S}emigroups},
        type={Ph.D. Thesis},
      school={Louisiana State University},
        date={2004},
      eprint={http://etd.lsu.edu/docs/available/etd-07082004-143318/}
}

\bib{Larsson-Thomee}{article}{
    author = {\sc{S. Larsson}},
    author = {\sc{V. Thom{\'e}e}},
    author = {\sc{L.B. Wahlbin}},
     title = {Finite-element methods for a strongly damped wave equation},
   journal = {IMA J. Numer. Anal.},
    volume = {11},
      year = {1991},
    number = {1},
     pages = {115\ndash142},
}
	
\bib{NOS}{article}{
      author={\sc{F. Neubrander}},
      author={\sc{K. \"Ozer}},
      author={\sc{T. Sandmaier}},
       title={Rational approximation of semigroups without scaling and
  squaring},
     journal={Discrete Contin. Dyn. Syst.},
      volume={33},
        year={2013},
      number={11\&12},
       pages={5305\ndash 5317}
}

\bib{Frank-Lee}{article}{
      author={\sc{F. Neubrander}},
      author={\sc{L. Windsperger}},
       title={Sharp growth estimates for subdiagonal rational {P}ad\'{e}
  approximations (first draft)},
      eprint={https://www.math.lsu.edu/~neubrand/TheComputerEstimates2013.pdf},
}

\bib{perron1913lehre}{book}{
      author={\sc{O. Perron}},
       title={Die {L}ehre von den {K}ettenbr{\"u}chen},
   publisher={B.G. Teubner},
     address={Leipzig, {Berlin}},
        date={1913},
      volume={1-2},
}

\bib{RudinRC}{book}{
      author={\sc{W. Rudin}},
       title={Real and {C}omplex {A}nalysis},
     edition={3},
   publisher={McGraw-Hill Book Co.},
     address={New York},
        date={1987},
}

\bib{Sz.-Nagy}{article}{
    	author={\sc{B. de Sz.~Nagy}},
     	title={On uniformly bounded linear transformations in {H}ilbert
              space},
   		journal={Acta Univ. Szeged. Sect. Sci. Math.},
    	volume={11},
      year={1947},
     	pages={152\ndash157},
}

\bib{vanNeerven}{incollection}{
      author={\sc{J. van Neerven}},
       title={{$\gamma$}-radonifying operators -- a survey},
   booktitle={The {AMSI}-{ANU} {W}orkshop on {S}pectral {T}heory and {H}armonic {A}nalysis},
      series={Proc. Centre Math. Appl. Austral. Nat. Univ.},
      volume={44},
       pages={1\ndash61},
   publisher={Austral. Nat. Univ.},
     address={Canberra},
        year={2010},
}

\bib{Weis}{article}{
      author={\sc{L. Weis}},
       title={Operator-valued {F}ourier multiplier theorems and maximal {$L_p$}-regularity},
     journal={Math. Ann.},
      volume={319},
        year={2001},
      number={4},
       pages={735\ndash758},
}

\bib{Widder}{book}{
      author={\sc{D.V. Widder}},
       title={The {L}aplace {T}ransform},
      series={Princeton Mathematical Series},
   publisher={Princeton University Press},
     address={Princeton, N. J.},
        date={1941},
      volume={6},
}

\bib{Lee-Diss}{thesis}{
      author={\sc{L. Windsperger}},
       title={Operational {M}ethods for {E}volution {E}quations},
        type={Ph.D. Thesis},
      school={Louisiana State University},
        date={2012},
      eprint={http://etd.lsu.edu/docs/available/etd-07112012-204148/}
}

\end{biblist}
\end{bibdiv}

\end{document}